\theoremstyle{plain}
\newtheorem{theorem}{Theorem}[section]
\newtheorem{lemma}[theorem]{Lemma}
\newtheorem{corollary}[theorem]{Corollary}
\newtheorem{proposition}[theorem]{Proposition}
\newtheorem*{theorem*}{Theorem}
\theoremstyle{definition}
\newcommand{\BIGOP}[1]{\mathop{\mathchoice%
{\raise-0.22em\hbox{\huge $#1$}}%
{\raise-0.05em\hbox{\Large $#1$}}{\hbox{\large $#1$}}{#1}}}
\newcommand{\one}{\mathbf 1}
\newcommand{\R}{\mathbb{R}}
\newcommand{\X}{\mathcal{S}}
\newcommand{\XX}{\mathcal{X}}
\newcommand{\im}{\operatorname{im}}
\newcommand\Sym{\mathfrak S}
\newcommand{\conf}{\operatorname{F}} 
\newcommand{\A}{\mathrm{A}}
\newcommand{\spann}{\operatorname{span}}
\newcommand{\codim}{\operatorname{codim}}
\newcommand{\vol}{\operatorname{vol}}
\newcommand{\conv}{\operatorname{conv}}
\newcommand{\relint}{\operatorname{relint}}
\newcommand{\cone}{\operatorname{cone}}
\newcommand{\V}{\operatorname{VOL}}
\renewcommand{\emptyset}{\varnothing}
\newcommand{\Mat}{\operatorname{Mat}}
\newcommand{\rank}{\operatorname{rank}}
\newcommand{\diag}{\operatorname{diag}}
\newcommand{\trace}{\operatorname{trace}}
\begin{document}

\title[Facet volumes of polytopes]{Facet volumes of polytopes}


\author[Blagojevi\'c]{Pavle V. M. Blagojevi\'{c}}
\thanks{The research of Pavle V. M. Blagojevi\'{c} was supported by the Serbian Ministry of Education and Science  and the German Science Foundation DFG via the Collaborative Research Center TRR~109 ``Discretization in Geometry and Dynamics.''
}
\address{Mathemati\v{c}ki Institut SANU, Knez Mihailova 36, 11001 Beograd, Serbia}
\email{pavleb@mi.sanu.ac.rs}
\curraddr{\sc Institut f\"ur Mathematik, Freie Universit\"at Berlin, Arnimallee 2, 14195 Berlin, Germany}
\email{blagojevic@math.fu-berlin.de}

\author[Breiding]{Paul Breiding}
\thanks{The research of Paul Breiding was funded by the Deutsche Forschungsgemeinschaft (DFG, German Research Foundation), Projektnummer 445466444.}
\address{Max-Planck-Institute for Mathematics in the Sciences, Inselstr.\ 22, 04103 Leipzig, Germany}
\email{paul.breiding@mis.mpg.de}

\author[Heaton]{Alexander Heaton}
\thanks{The research of Alex Heaton was supported by the Fields Institute for Research in Mathematical Sciences}
\address{Lawrence University, Appleton, Wisconsin, USA}
\email{alexheaton2@gmail.com}

\date{}%
\renewcommand\medskip{\vspace{2pt}}

\begin{abstract}
In this paper, motivated by the work of Edelman and Strang, we show that for fixed integers $d\geq 2$ and $n\geq d+1$ the configuration space of all facet volume vectors of all $d$-polytopes in $\R^{d}$ with~$n$ facets is a full dimensional cone in $\R^{n}$.
In particular, for tetrahedra ($d=3$ and $n=4$) this is a cone over a regular octahedron.
Our proof is based on a novel configuration space / test map scheme which uses topological methods for finding solutions of a problem, and tools of differential geometry to identify solutions with the desired properties.
Furthermore, our results open a possibility for the study of realization spaces of all $d$-polytopes in $\R^{d}$ with $n$ facets by the methods of algebraic topology.

\end{abstract}

\maketitle



\section{Introduction and the statement of main results}
\label{Sec: Introduction and the statement of main results}

Already in elementary school we learn that each triangle in the plane is defined, up to a plane isometry, by the length of its edges.
These lengths fulfill three triangle inequalities.
On the other hand, every triple of positive real numbers, satisfying all three triangle inequalities, gives rise to a unique triangle, up to a plane isometry, with edge lengths coinciding with the given numbers.


\medskip
{\em What would be a high dimensional analogue of this basic school fact?}
Let $d\geq 1$ be an integer, and let $(x_1,\dots,x_{d+1})$ be a collection of $d+1$ affinely independent points in $\R^d$.
The convex hull of the collection
\[
\Delta_d(x_1,\dots,x_{d+1}):=\conv\{x_1,\dots,x_{d+1}\}\subseteq \R^d
\]
is a $d$-dimensional simplex in $\R^d$.
The simplex $\Delta_d(x_1,\dots,x_{d+1})$ has $d+1$ facets given by
\[
F_i(x_1,\dots,x_{d+1}):= \conv\big(\{x_1,\ldots,x_{d+1}\}\setminus\{x_i\}\big),
\]
where $1\leq i\leq d+1$.
Hence,  $F_i(x_1,\dots,x_{d+1})$ is the facet opposite to the vertex $x_i$.
Let us now consider the $(d-1)$-dimensional volume of the facets, denoted by
\[
v_i(x_1,\dots,x_{d+1}):=\vol_{d-1}\big(F_i(x_1,\dots,x_{d+1})\big).
\]
In this way we have defined a map $\nu_d\colon \A(\R^d,d+1)\to\R^{d+1}$ given by
\[
\nu_d(x_1,\dots,x_{d+1})\ =  \ \big(v_1(x_1,\dots,x_{d+1}),\dots, v_{d+1}(x_1,\dots,x_{d+1})\big),
\]
where $\nu_d(x_1,\dots,x_{d+1})$ is called the {\em facet volume vector} of the simplex $\Delta_d(x_1,\dots,x_{d+1})$.
Here $\A(\R^d,d+1)$ denotes the space of all collections of $d+1$ affinelly independent points in $\R^d$, or in other words,
\[
\A(\R^d,d+1):=\big\{ (x_1,\dots,x_{d+1})\in(\R^d)^{d+1} \ :\ \big(\forall (\lambda_1,\dots,\lambda_{d+1})\in W_{d+1}{\setminus}\{0\}\big) \sum_{1\leq i\leq d+1}\lambda_ix_i \neq 0 \big\},
\]
where $W_{d+1}:=\{(\lambda_1,\dots,\lambda_{d+1})\in \R^{d+1} : \sum_{1\leq i\leq d+1}\lambda_i=0\}.$
The space $\A(\R^d,d+1)$ is also known as the {\em realization space} of the $d$-dimensional simplex in $\R^d$.
Note that the definition of space $\A(\R^d,d+1)$ can be extended to all collections of $r$ points where $1\leq r\leq d+1$.
For example, $\A(\R^d,1)=\R^d$, and $\A(\R^d,2)$ coincides with the classical configuration space of two pairwise distinct points in $\R^d$.
Furthermore, the first facet volume can be expressed as $v_1(x_1,\ldots,x_{d+1}) = \tfrac{1}{(d-1)!}\sqrt{\det(A^t\cdot A)}$, where
$A$ is the matrix $[x_3-x_2 \ \cdots \ x_{d+1}-x_2]\in\R^{d\times (d-1)}$.
Analogous formulas hold for the other volumes.
This shows that $\nu_d$ is a smooth map, when $\A(\R^d,d+1)$ is endowed with the subspace topology from $(\R^d)^{d+1}$.

Now, our elementary school knowledge tells us that
\[
\im(\nu_1)=\{(0,0)\}
\ \quad\text{and} \ \quad
\im(\nu_2)=\big\{(\alpha_1,\alpha_2,\alpha_3)\in(\R_{>0})^3 \ :\ \alpha_1+\alpha_2<\alpha_3, \ \alpha_2+\alpha_3<\alpha_1, \ \alpha_3+\alpha_1<\alpha_2 \big\}.
\]
Here $\R_{>0}=(0,+\infty) \subseteq\R$ denotes the subset of the positive real numbers.
Hence, the general analogous question we want to answer is: {\em What is the image of the map $\nu_d$ for every $d\geq 1$, or in other words, what facet volume vectors can one get in an arbitrary dimension?}

\begin{theorem}
\label{Th: Main 01}
Let $d\geq 2$ be an integer, and let $\Sym_{d+1}$ denote the symmetric group on $d+1$ letters.
Then the space of all facet volume vectors is the open cone:
\begin{equation}
\label{Th: Main 01 - equality}
\V_{d,d+1}:=\im(\nu_d)=\big\{(\alpha_1,\dots,\alpha_{d+1})\in(\R_{>0})^{d+1} \ :\ \alpha_{\pi(d+1)}<\alpha_{\pi(1)}+\dots+ \alpha_{\pi(d)},\,\pi\in\Sym_{d+1}\big\}.
\end{equation}
\end{theorem}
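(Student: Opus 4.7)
The plan is to establish both inclusions $\im(\nu_d)\subseteq \V_{d,d+1}$ (necessity of the generalized triangle inequalities) and $\V_{d,d+1}\subseteq \im(\nu_d)$ (sufficiency), the first being routine and the second being where the real work lies.

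For necessity, I would invoke the Minkowski identity: if the simplex $\Delta_d(x_1,\dots,x_{d+1})$ has outward unit facet normals $n_1,\dots,n_{d+1}$ and facet volumes $v_1,\dots,v_{d+1}$, then applying the divergence theorem to a constant vector field over $\Delta_d$ yields
\[
\sum_{i=1}^{d+1} v_i\, n_i \;=\; 0.
\]
Solving for $n_i$ and taking norms gives $v_i=\bigl\|\sum_{j\neq i}v_j n_j\bigr\|\leq \sum_{j\neq i}v_j$, with equality only when all $n_j$ ($j\neq i$) are anti-parallel to $n_i$. Since the facet normals of a non-degenerate $d$-simplex span $\R^d$, equality is impossible, and the strict inequalities of $\V_{d,d+1}$ hold.

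For sufficiency, I would appeal to the existence half of Minkowski's classical theorem on convex polytopes: if $u_1,\dots,u_k\in S^{d-1}$ positively span $\R^d$ and the positive weights $\beta_i$ satisfy $\sum \beta_i u_i=0$, then there exists a convex polytope in $\R^d$, unique up to translation, with outward facet normals $u_i$ and corresponding facet volumes $\beta_i$. Taking $k=d+1$, this polytope automatically has $d+1$ facets and is therefore a $d$-simplex. Hence, given $(\alpha_1,\dots,\alpha_{d+1})\in\V_{d,d+1}$, it suffices to produce unit vectors $n_1,\dots,n_{d+1}\in S^{d-1}$ spanning $\R^d$ with $\sum_{i=1}^{d+1}\alpha_i n_i=0$.

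The main obstacle is exactly this construction, which amounts to a closed polygon realization problem in $\R^d$. Fixing $n_{d+1}$ arbitrarily, I would need to realize $-\alpha_{d+1}n_{d+1}$ as $\sum_{i=1}^d \alpha_i n_i$, i.e., as a sum of $d\geq 2$ unit vectors scaled by prescribed positive lengths. As the $n_i$ vary over $S^{d-1}$, the attainable norms of $\sum_{i=1}^d \alpha_i n_i$ fill the closed interval whose endpoints are $\max\bigl(0,\max_{i\leq d}\alpha_i - \sum_{j\leq d,\,j\neq i}\alpha_j\bigr)$ and $\sum_{i\leq d}\alpha_i$; the hypothesis $(\alpha_1,\dots,\alpha_{d+1})\in\V_{d,d+1}$ places $\alpha_{d+1}$ strictly inside this interval, so a solution exists. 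A small generic perturbation, rotating the resulting $n_1,\dots,n_d$ about the axis through $n_{d+1}$ (possible because $d\geq 2$), then guarantees that the full collection $n_1,\dots,n_{d+1}$ additionally spans $\R^d$ and consists of pairwise distinct unit vectors, completing the construction.
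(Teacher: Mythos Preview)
Your overall architecture agrees with the paper's: both inclusions are reduced, via the Minkowski relation $\sum_i v_i n_i=0$ and Minkowski's existence theorem, to the purely vectorial problem of producing pairwise distinct unit vectors $n_1,\dots,n_{d+1}\in S^{d-1}$ that span $\R^d$ and satisfy $\sum_i \alpha_i n_i=0$. The divergence is entirely in how you solve this last problem, and here your argument has a genuine gap.

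The step that fails is the perturbation. Applying a common rotation $R\in\mathrm{SO}(d)$ fixing the line $\R n_{d+1}$ to all of $n_1,\dots,n_d$ is a rigid motion of the whole configuration: it preserves $\sum_{i\le d}\alpha_i n_i$ (good), but it also preserves the dimension of $\spann\{n_1,\dots,n_d\}$ and the coincidence pattern among the $n_i$. So if your initial planar-linkage solution fails to span $\R^d$ or has repeated directions, the rotated one fails in exactly the same way. In other words, this perturbation moves you along an orbit of the symmetry group of the problem, not transversely to the degenerate locus. For $d=2$ no perturbation is needed (the strict inequalities already preclude a collinear solution), but for $d\ge 3$ your linkage construction naturally lives in a $2$-plane and something substantive must be done to lift it.

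What is actually required is to move \emph{within the solution variety} $\{(n_1,\dots,n_d)\in (S^{d-1})^d:\sum_{i\le d}\alpha_i n_i=-\alpha_{d+1}n_{d+1}\}$ to a point outside the non-spanning and coincidence loci. The paper establishes exactly this, but by a different and considerably more involved route: it shows that the product of spheres meets each fixed-rank matrix stratum transversally, that the summation map is a submersion off the corank-$2$ locus, and then, by a tangent-space computation, that the fibre cannot be trapped in the corank-$1$ stratum. A more elementary fix of your approach would be to ``bend'' a planar polygon solution into higher dimensions one hinge at a time (replacing a pair $n_i,n_j$ by another pair on the $(d-2)$-sphere of solutions to $\alpha_i m_i+\alpha_j m_j=\alpha_i n_i+\alpha_j n_j$), but this still needs a careful inductive argument controlling both the span and the distinctness, which you have not supplied.
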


\begin{figure}
  \begin{center}
\includegraphics[height = 4cm]{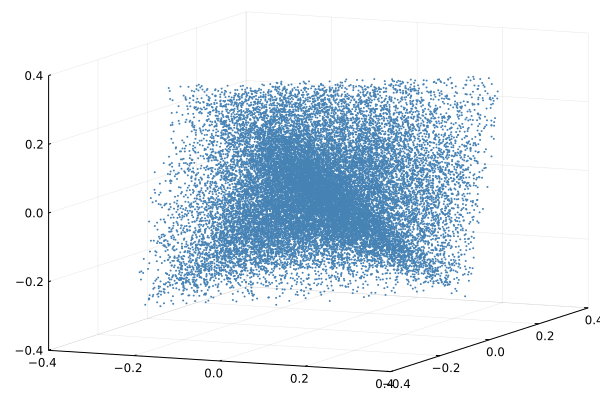}
\hfill
\includegraphics[height = 4cm]{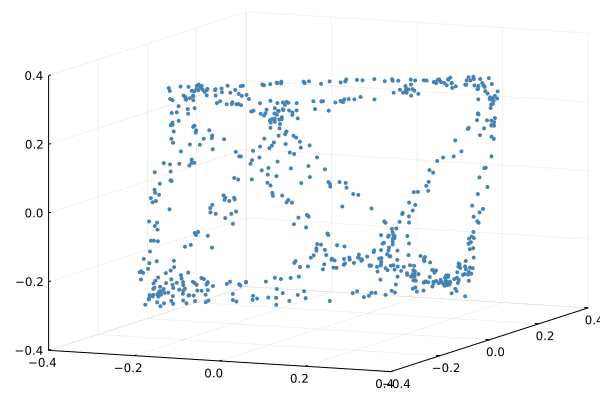}
\end{center}
\caption{\label{fig1} The picture on the left shows $5\cdot 10^5$ facet volume vectors of tetrahedra, which were obtained as $\nu_3(x_1,x_2,x_3,x_4)$, were $x_1,x_2,x_3,x_4\in\mathbb R^3$ are independently chosen random points with standard Gaussian entries. The right picture shows a subsample of points on the boundary. One can see that the points form a regular octahedron. This is proved in Corollary \ref{Cor : Main - 01}. The pictures were created using~\texttt{Plots.jl} \cite{plots}.}
\end{figure}

\medskip
The statement of the theorem uses the natural (left) action of the symmetric group $\Sym_{d+1}$ on $\R^{d+1}$ given by the permutation of the coordinates.
For this reason, the defining inequalities in the right hand side of \eqref{Th: Main 01 - equality} repeat many times, but on the other hand it is clear that $\V_{d,d+1}$ is an $\Sym_{d+1}$-invariant subspace of $\R^{d+1}$.
Furthermore, the map $\nu_d$ is $\Sym_{d+1}$-equivariant if the action on $\A(\R^d,d+1)$ is given by the permutation of the points --- the vertices of a simplex.

\medskip
The points on the diagonal $\{(\alpha_1,\dots,\alpha_{d+1})\in(\R_{>0})^{d+1} \ : \ \alpha_1=\dots=\alpha_{d+1}\}$ which belong to $\V_{d,d+1}$ correspond to the simplices whose facet volumes all coincide.
In the plane these are equilateral triangles.

\medskip
In their recent publication Alan Edelman and Gilbert Strang \cite{EdelmanStrang2015randomTriangleTheory} studied the shape of $\V_{2,3}=\im(\nu_2)$.
For example, they showed that when the edge lengths $(\alpha_1,\alpha_2,\alpha_3)$ of a triangle are normalised to the unit sphere, that is $\alpha_1^2+\alpha_2^2+\alpha_3^2=1$, and the point on the unit sphere $e=\tfrac{1}{\sqrt{3}}(1,1,1)$ is considered as the north pole, then latitudinal circles correspond to the triangles of equal area.

\medskip
Motivated by the idea of normalization, but now with respect to the affine hyperplane $H_{d+1}$ given by the equality $\sum_{1\leq i\leq d+1}\alpha_i=1$, instead of the unit sphere, we get the following description of the cone $\V_{d,d+1}$.
For this we take $e_1,\dots,e_{d+1}$ to be the standard basis of $\R^{d+1}$, set $e:=\sum_{1\leq i\leq d+1}e_i$, and denote by $\Delta_d$ the $d$-dimensional simplex $\conv\big\{e_1-\tfrac1{d+1}e,\dots, e_{d+1}-\tfrac1{d+1}e\big\}\subseteq W_{d+1}=H_{d+1}-\frac1{d+1}e$.
In the following, the standard scalar product of the Euclidean space $\R^{d+1}$ will be denoted by $\langle\cdot,\cdot\rangle$.

\begin{corollary}
\label{Cor : Main - 01}
Let $d\geq 2$ be an integer, $P_d:=\conv\big(\tfrac{2d+2}{d-1}\Delta_d \,\cup\, (d+1)(-\Delta_d)\big)\subseteq W_{d+1}$, and let $P_d^*$ be its polar in $W_{d+1}$.
Then,
\begin{equation}
\label{Th: Main 02 - equality}
\V_{d,d+1}\cap H_{d+1} = \relint (P_d^*) +  \tfrac1{d+1}e
\qquad\text{and}\qquad
\V_{d,d+1}= \cone  \big( \relint (P_d^*) +  \tfrac1{d+1}e\big){\setminus}\{0\}.
\end{equation}
\end{corollary}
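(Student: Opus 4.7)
The plan is to translate the defining conditions of $\V_{d,d+1}$ from Theorem~\ref{Th: Main 01} into affine data on the slice $H_{d+1}$, and then to identify this data with the relative interior of $P_d^*$ shifted by $\tfrac{1}{d+1}e$. First I would observe that both families of defining conditions --- the positivity $\alpha_i > 0$ and the strict ``triangle-like'' inequalities $\alpha_{\pi(d+1)} < \alpha_{\pi(1)}+\dots+\alpha_{\pi(d)}$ --- are homogeneous of degree one, so $\V_{d,d+1}$ is an open cone in $\R^{d+1}{\setminus}\{0\}$. Moreover every $\alpha \in \V_{d,d+1}$ satisfies $\sum_i \alpha_i > 0$, so the normalisation $\alpha \mapsto \alpha/\sum_i \alpha_i$ puts each ray in bijection with a unique point of $\V_{d,d+1}\cap H_{d+1}$. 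Consequently the second equality in \eqref{Th: Main 02 - equality} is a formal consequence of the first, and the task reduces to identifying $\V_{d,d+1}\cap H_{d+1}$ with $\relint(P_d^*)+\tfrac{1}{d+1}e$.

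Next I would simplify the inequalities under the normalisation $\sum_i \alpha_i = 1$: using $\alpha_{\pi(1)}+\dots+\alpha_{\pi(d)} = 1 - \alpha_{\pi(d+1)}$, the strict inequality of Theorem~\ref{Th: Main 01} becomes $\alpha_{\pi(d+1)} < \tfrac{1}{2}$. Letting $\pi$ range over $\Sym_{d+1}$ gives
\[
\V_{d,d+1}\cap H_{d+1} \;=\; \big\{\, y \in H_{d+1} \,:\, 0 < y_i < \tfrac{1}{2} \text{ for all } i \,\big\}.
\]
Translating by $-\tfrac{1}{d+1}e$ sends this set to
\[
\big\{\, x \in W_{d+1} \,:\, -\tfrac{1}{d+1} < x_i < \tfrac{d-1}{2(d+1)} \text{ for all } i \,\big\}.
\]

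It remains to show that this open set equals $\relint(P_d^*)$. To describe the polar, I would pair $x \in W_{d+1}$ (for which $\langle x,e\rangle = 0$) against each vertex of $P_d$: the vertex $\tfrac{2d+2}{d-1}(e_i-\tfrac{1}{d+1}e)$ yields $\langle x,\cdot\rangle = \tfrac{2(d+1)}{d-1}\,x_i$, giving the upper bound $x_i \le \tfrac{d-1}{2(d+1)}$; the vertex $-(d+1)(e_j - \tfrac{1}{d+1}e)$ yields $-(d+1)\,x_j$, giving the lower bound $x_j \ge -\tfrac{1}{d+1}$. So $P_d^*$ is cut out in $W_{d+1}$ by precisely the desired $2(d+1)$ non-strict inequalities. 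Since the origin $0 \in W_{d+1}$ satisfies all of them strictly (using $d \geq 2$, so that $-\tfrac{1}{d+1} < 0 < \tfrac{d-1}{2(d+1)}$), the polytope $P_d^*$ is full-dimensional in $W_{d+1}$, and hence its relative interior coincides with the locus where all $2(d+1)$ inequalities are strict. This matches the open region computed above, completing the identification. The main (mild) technical point in the argument is precisely this passage from strict inequalities to the relative interior, which is handled by the full-dimensionality just noted.
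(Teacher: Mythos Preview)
Your proof is correct and follows essentially the same route as the paper: rewrite the inequalities from Theorem~\ref{Th: Main 01} on the slice $H_{d+1}$ as $0<\alpha_i<\tfrac12$, translate by $-\tfrac{1}{d+1}e$ into $W_{d+1}$, and recognise the resulting box constraints as the polar conditions $\langle x,v\rangle<1$ for the generating points $v$ of $P_d$. You supply two details the paper leaves implicit --- the homogeneity argument reducing the cone statement to the slice, and the full-dimensionality of $P_d^*$ needed to pass from strict inequalities to the relative interior --- but the substance is the same.
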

\begin{proof}
From Theorem \ref{Th: Main 01} we know that $\V_{d,d+1}$ is the open cone
\[
\big\{\alpha=(\alpha_1,\dots,\alpha_{d+1})\in(\R_{>0})^{d+1} \ :\  \alpha_{1}+\dots+\alpha_{i-1}-\alpha_i+\alpha_{i+1}+\dots+ \alpha_{d+1}> 0,\,1\leq i\leq d+1\big\},
\]
which, using the scalar product, can also be presented by
\[
\big\{\alpha\in \R^{d+1} \ : \ \langle \alpha,e_i\rangle>0,\,\langle \alpha,e-2e_i\rangle>0,\,1\leq i\leq d+1\big\}.
\]
On the other hand, the hyperplane $H_{d+1}$ can be presented by $\{\alpha\in\R^{d+1} : \langle \alpha,e\rangle=1\}$.
Therefore,
\begin{align*}
\V_{d,d+1}\cap H_{d+1} &= \big\{\alpha\in \R^{d+1} \, :\, \langle \alpha,e\rangle=1,\, 0<\langle \alpha,e_i\rangle<\tfrac12,\,1\leq i\leq d+1\big\}	\\
&= \big\{\alpha\in H_{d+1} \, : \,  -\tfrac{1}{d+1}< \langle \alpha-\tfrac1{d+1}e,e_i-\tfrac1{d+1}e\rangle <\tfrac{d-1}{2d+2},\,1\leq i\leq d+1\big\}\\
&= \big\{\alpha\in H_{d+1} \, : \, \langle \alpha-\tfrac1{d+1}e,\tfrac{2d+2}{d-1}(e_i-\tfrac1{d+1}e)\rangle < 1,\\
&\hspace{26.5mm} \langle \alpha-\tfrac1{d+1}e,(d+1)(-(e_i-\tfrac1{d+1}e))\rangle < 1,\,1\leq i\leq d+1\big\}\\
&=\relint (P_d^*) +  \tfrac1{d+1}e,
\end{align*}
as we have claimed.
\end{proof}

\medskip
Edelman and Strang \cite{EdelmanStrang2015randomTriangleTheory} showed that in case of triangles, $\V_{2,3}=\im(\nu_2)$ is a cone over the relative interior of an equilateral triangle where the apex, the origin, is deleted.
This can also be deduced from Corollary \ref{Cor : Main - 01} by observing the interesting phenomena that $-3\Delta_2\subseteq 6\Delta_2$.
In the case of dimension $d=3$, the polytope $P_3=\conv\big(4\Delta_3 \,\cup\, 4(-\Delta_3)\big)$ is a cube and consequently the polar $P_3^*$ is a regular octahedron.

\medskip
Another interesting geometric observation can be made.
Let $P$ be a $d$-dimensional polytope with facets $F_1,\dots,F_n$.
The \emph{dihedral angle} between the facets~$F_i$ and~$F_j$ of the polytope $P$ is defined to be the angle $\theta_{i,j}: =\arccos\langle-u_i,u_j\rangle$ between the corresponding (unit) outer unit normals $u_i$ and $u_j$ to the facets $F_i$ and~$F_j$.
In his paper from 2003, Leng \cite{Leng2003} proved that any $d$-dimensional simplex~$\Delta\subseteq \mathbb R^d$ has at most~$\tfrac{1}{2}d(d-1)$ obtuse dihedral angles, where the lower bound is attained by a specific class of simplices.
Leng's proof shows that there exists a $d$-dimensional simplex whose obtuse dihedral angles are exactly~$\theta_{i,j}$ for~$2\leq i<j\leq d+1$.
In such a simplex there is a facet which is not involved in any of its $\tfrac{1}{2}d(d-1)$ obtuse dihedral angles.
The facet volumes of this simplex satisfy the inequality
\[
\alpha_1^2 \geq \sum_{2\leq j\leq d} \alpha_j^2.
\]
Moreover, other collections of dihedral angles can also imply this inequality.
Motivated by the relationship between obtuse angles of a simplex and its facet volumes, we call a $d$-simensional simplex $\Delta$ \textit{obtuse} if for some index $1\leq i\leq d+1$ it holds that $\alpha_i^2 \geq \sum_{1\leq j\leq d+1, \,j \neq i} \alpha_j^2$.
Here $\alpha_1,\dots,\alpha_{d+1}$ denote, as before, the facet volumes of the corresponding simplex $\Delta$.
If such an index does not exist, a simplex is called \textit{acute}.
It is important to mention that our definition differs from other definitions of acute and obtuse simplices in the literature.
For example, Eppstein et.\ al. \cite{EPPSTEIN2004237} define a tetrahedron to be acute if all of its dihedral angles are acute.
This is a stronger condition.
Extension of this definition to arbitrary $d$-dimensional simplices was made by K\v{r}\'{i}\v{z}ek \cite{krizek2010}.

\medskip
The relationship between obtuse dihedral angles of a simplex and its facet volumes motivates yet another parametrization of simplices by, now, squared facet volumes.
We consider the set
\begin{equation*}
    \V_{d,d+1}^2 := \{ (\alpha_1^2,\dots, \alpha_{d+1}^2) \in \R^{d+1} \, : \, (\alpha_1, \dots, \alpha_{d+1}) \in \V_{d,d+1} \},
\end{equation*}
and the subspace $\mathcal{A} \subset \V_{d,d+1}^2$ of all acute simplices; these are the simplices whose facet volumes satisfy the inequalities $\alpha_i^2 < \sum_{1\leq j\leq d+1,\,j \neq i} \alpha_j^2$ for all $1\leq i\leq d+1$.
Then with $P_d$ and $H_{d+1}$, as defined in Corollary \ref{Cor : Main - 01}, we can show the following fact.
\begin{corollary}
$\mathcal{A} \cap H_{d+1} = \relint (P_d^*) +  \tfrac1{d+1}e$.
\end{corollary}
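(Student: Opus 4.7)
The plan is to unwind the definitions of $\mathcal{A}\cap H_{d+1}$ and show that this set coincides with $\{\beta\in H_{d+1}:0<\beta_i<\tfrac12 \text{ for all } i\}$, which by the calculation in the proof of Corollary \ref{Cor : Main - 01} is exactly $\relint(P_d^*)+\tfrac1{d+1}e$.

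First, I would translate the membership conditions. A vector $\beta=(\beta_1,\dots,\beta_{d+1})$ lies in $\mathcal{A}\cap H_{d+1}$ precisely when:
(a) $\sum_i\beta_i=1$;
(b) $\beta_i<\sum_{j\neq i}\beta_j$ for every $i$;
(c) there exists $\alpha\in\V_{d,d+1}$ with $\beta_i=\alpha_i^2$.
Conditions (a) and (b) together are equivalent to $0<\beta_i<\tfrac12$ on $H_{d+1}$; positivity comes free from (c) since $\V_{d,d+1}\subseteq(\R_{>0})^{d+1}$. The nontrivial content of the corollary is therefore the realizability condition (c), which must be checked for every $\beta\in H_{d+1}$ with $0<\beta_i<\tfrac12$.

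For this, I would take the natural candidate $\alpha_i:=\sqrt{\beta_i}$ and invoke Theorem \ref{Th: Main 01}: $\alpha\in\V_{d,d+1}$ if and only if the triangle-like inequalities $\alpha_i<\sum_{j\neq i}\alpha_j$ hold for every $i$. The key step is then the elementary estimate
\[
\Big(\sum_{j\neq i}\sqrt{\beta_j}\Big)^{2}=\sum_{j\neq i}\beta_j+2\!\!\!\sum_{\substack{j<k\\ j,k\neq i}}\!\!\sqrt{\beta_j\beta_k}\ \geq\ \sum_{j\neq i}\beta_j\ =\ 1-\beta_i\ >\ \beta_i\ =\ \alpha_i^2,
\]
where the last strict inequality uses $\beta_i<\tfrac12$. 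Taking square roots yields the required triangle inequality, so $\alpha\in\V_{d,d+1}$ and hence $\beta=(\alpha_1^2,\dots,\alpha_{d+1}^2)\in\V_{d,d+1}^2$, verifying (c). Combined with (a) and (b), this places $\beta$ in $\mathcal{A}\cap H_{d+1}$, and together with the straightforward converse inclusion observed above it gives the claimed equality.

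There is no real obstacle here beyond bookkeeping: one must keep in mind that $\V_{d,d+1}$ parameterizes facet volumes themselves whereas $\mathcal{A}$ and $\V_{d,d+1}^2$ parameterize their squares, so the hyperplane $H_{d+1}$ slices each set differently. Once this is acknowledged, the corollary follows by the expand-the-square inequality above and the formula for $\V_{d,d+1}\cap H_{d+1}$ established in the proof of Corollary \ref{Cor : Main - 01}.
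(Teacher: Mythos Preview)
Your argument is correct. The paper states this corollary without proof, evidently regarding it as an immediate consequence of the description of $\V_{d,d+1}\cap H_{d+1}$ worked out in the proof of Corollary~\ref{Cor : Main - 01}; your write-up supplies precisely the missing detail, namely that on $H_{d+1}$ the acuteness constraint $\beta_i<\tfrac12$ already forces $(\sqrt{\beta_1},\dots,\sqrt{\beta_{d+1}})\in\V_{d,d+1}$ via the expand-the-square inequality, so the condition $\beta\in\V_{d,d+1}^2$ is automatic. This is exactly the natural route and there is nothing to add.
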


\medskip
In the case of a plane this says that acute triangles form an equilateral triangle, while in the case $d=3$ we have that acute tetrahedra form a regular octahedron.

\begin{figure}
  \begin{center}
\includegraphics[height = 4cm]{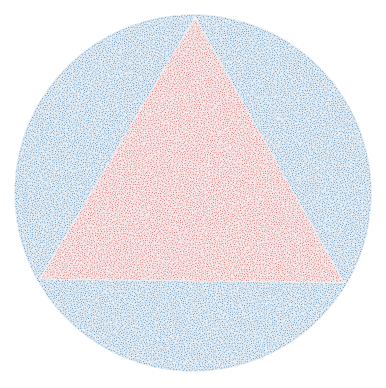}
\hspace{2em}
\includegraphics[height = 4cm]{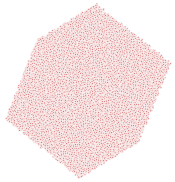}
\hspace{2em}
\includegraphics[height = 3.8cm]{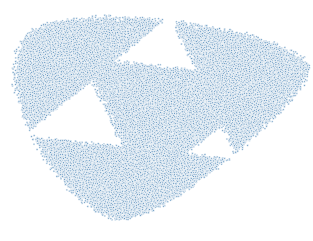}
\end{center}
\caption{\label{fig2} The picture on the left is similar to \cite[Figure 4]{EdelmanStrang2015randomTriangleTheory}. It shows $5\cdot 10^5$ triangles  in the coordinates $s_1,s_2,s_3$, the squared side lengths, subject to $s_1+s_2+s_3=1$. The data was sampled as $\nu_2(x_1,x_2,x_3)$, were $x_1,x_2,x_3\in\mathbb R^2$ are independently chosen random vectors with standard Gaussian entries. The red points are acute triangles. The blue points are obtuse triangles. One can see that the acute triangles form an equilateral triangle. The other two pictures show  $5\cdot 10^5$ tetrahedra in the coordinates $s_1,s_2,s_3,s_4$, squared facet areas, in the plane $s_1+s_2+s_3+s_4=1$.
The red points are acute tetrahedra and form a regular octahedron. The octahedron's absence can be seen in the right picture, which shows obtuse tetrahedra in blue. Merging the two pictures would give a single convex body of all tetrahedra, obtuse and acute. The pictures were created using \texttt{Plots.jl} \cite{plots}.}
\end{figure}

\medskip
After complete determination of the space of facet volume vectors of $d$-dimensional simplices in $\R^d$ a natural question arrises: {\em What about facet volume vectors of other polytopes?}
More precisely, for given integers $d\geq 2$ and $n\geq d+1$, what is the space $\V_{d,n}$ of all vectors $(\alpha_1,\dots,\alpha_n) \in(\R_{>0})^{n}$ such that there exists a $d$-dimensional polytope $P$ in $\R^d$ with $n$ facets $F_1,\dots,F_n$ with the property that $\alpha_i=\vol_{d-1}(F_i)$ for every $1\leq i\leq n$.
As in the case of simplices ($n=d+1$), the set $\V_{d,n}$ can also be seen as an image of the realization space of all $d$-polytopes with $n$-facets along the appropriate map.

\medskip
The set of all volume vectors $\V_{d,n}$ can be described in general, and we show the following generalization of Theorem \ref{Th: Main 01}.

\begin{theorem}
\label{Th: Main 02}
Let $d\geq 2$ and $n\geq d+1$ be integers.
Then,
\[
\V_{d,n}=\big\{ (\alpha_1,\dots,\alpha_n) \in(\R_{>0})^{n} \, : \, \alpha_{\pi(n)}<\alpha_{\pi(1)}+\dots+ \alpha_{\pi(n-1)},\,\pi\in\Sym_{n}\big\}.
\]

\end{theorem}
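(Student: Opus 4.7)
The plan is to prove the two inclusions separately, reducing the reverse inclusion to Minkowski's classical theorems on convex bodies.

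For the forward inclusion $\V_{d,n} \subseteq \big\{\alpha \in (\R_{>0})^n : \alpha_{\pi(n)} < \alpha_{\pi(1)} + \cdots + \alpha_{\pi(n-1)},\ \pi \in \Sym_n\big\}$, I would apply Minkowski's first relation $\sum_i \alpha_i u_i = 0$, where $u_1, \ldots, u_n \in S^{d-1}$ are the outer unit normals of a $d$-polytope $P$ with facet volumes $\alpha_i$. Pairing with $u_j$ gives $\alpha_j = -\sum_{i \neq j}\alpha_i \langle u_i, u_j\rangle \leq \sum_{i \neq j} \alpha_i$, and equality would force $u_i = -u_j$ for every $i \neq j$, contradicting the fact that the outer normals of a bounded full-dimensional polytope must linearly span $\R^d$. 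Re-indexing by $\pi \in \Sym_n$ produces all of the strict polygon inequalities.

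For the reverse inclusion, the key tool is Minkowski's existence theorem: given distinct unit vectors $u_1, \ldots, u_n \in S^{d-1}$ that linearly span $\R^d$ together with positive numbers $\alpha_1, \ldots, \alpha_n$ with $\sum_i \alpha_i u_i = 0$, there is a $d$-polytope in $\R^d$, unique up to translation, whose $i$-th facet has outer unit normal $u_i$ and $(d-1)$-volume $\alpha_i$. It therefore suffices, given $(\alpha_i)$ obeying the polygon inequalities, to produce distinct unit vectors $u_i \in S^{d-1}$ that linearly span $\R^d$ and satisfy $\sum_i \alpha_i u_i = 0$. For $d = 2$ this is the classical closed polygon lemma: the polygon inequalities are equivalent to the existence of a non-degenerate closed polygonal chain in $\R^2$ with the prescribed edge lengths, and such a chain automatically yields $u_i$'s that span $\R^2$. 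For $d \geq 3$ I would embed a planar closed polygon into $\R^d$ and then perturb it inside the smooth constraint manifold $M = \{(w_i) \in (S^{d-1})^n : \sum_i \alpha_i w_i = 0\}$, whose tangent space at a regular point has dimension $n(d-1) - d \geq d^2 - d - 1 > 0$ thanks to $n \geq d+1$.

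The main obstacle, and the step requiring most care, is this perturbation in dimensions $d \geq 3$: one must show that inside $M$ the locus of configurations that both linearly span $\R^d$ and consist of pairwise distinct unit vectors is nonempty. The key observation is that, for any pair of indices $i \neq j$, the set of $(u_i, u_j) \in (S^{d-1})^2$ with a prescribed value of $\alpha_i u_i + \alpha_j u_j$ is a $(d-2)$-dimensional manifold (positive-dimensional for $d \geq 3$), so one can freely move each such pair while leaving all other $u_k$ fixed; iterating such pair moves lifts the configuration out of any proper linear subspace and resolves coincidences $u_i = u_j$, all without disturbing the closure condition $\sum_k \alpha_k u_k = 0$. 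Once such a configuration is in hand, Minkowski's existence theorem outputs a polytope with facet volume vector $(\alpha_1, \ldots, \alpha_n)$, completing the proof.
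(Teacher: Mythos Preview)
Your forward inclusion is the paper's: both invoke the Minkowski relation $\sum_i\alpha_iu_i=0$ and extract the strict inequality from the fact that the outer normals of a full-dimensional polytope cannot all be parallel. For the reverse inclusion both arguments reduce, via Minkowski's existence theorem, to producing pairwise distinct unit vectors spanning $\R^d$ with $\sum_i\alpha_iu_i=0$, but from there the routes diverge sharply. The paper sets up a configuration space/test map scheme: it studies $\varphi\colon S_1\times\cdots\times S_{n-1}\to\R^d$, $(x_i)\mapsto\sum x_i$, proves an intermediate-value lemma that $\varphi$ hits the sphere $S_n$, establishes transversality of the product of spheres with every fixed-rank stratum $M_r\subset\Mat_{d\times(n-1)}(\R)$, and then carries out separate tangent-space computations for $n=d+1$ and $n\geq d+2$ to locate a preimage avoiding the low-rank and coincidence loci. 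Your route is decidedly more elementary: solve the problem in the plane via the polygon lemma, embed in $\R^d$, and walk the configuration to full rank using pair moves along the $(d-2)$-sphere $\{(u_i,u_j):\alpha_iu_i+\alpha_ju_j=v\}$. You trade away all the transversality machinery; the paper's approach in return is basepoint-free and feeds the realisation-space picture advertised in the introduction.

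Two points in your sketch need tightening. First, the pair move on $(u_i,u_j)$ degenerates to a single point precisely when $u_i=\pm u_j$ (the two spheres of radii $\alpha_i,\alpha_j$ with centres $0,v$ are then internally or externally tangent), so a coincidence cannot be resolved by moving that pair, and ``a closed polygonal chain'' is not enough in the plane: you must start from a \emph{convex} polygon with pairwise non-parallel edge directions (an in-plane perturbation arranges this), so that every pair move is available and the $u_i$ are already distinct. Second, the assertion that iterated pair moves lift the span all the way to $\R^d$ is correct --- the $(d-2)$-sphere for any non-antipodal pair can never lie inside a proper subspace $L$ containing the current configuration, since its affine hull is a hyperplane orthogonal to $v\in L$ --- but when you reuse a vector in successive moves you must check that the dimensions gained in earlier steps survive (e.g.\ that enough of the untouched $u_k$ still span the earlier subspaces). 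This is routine bookkeeping, but it is where the argument actually lives, and your proposal currently leaves it implicit.
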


In the statement of Theorem \ref{Th: Main 02} we decide to neglect the description of the cone $\V_{d,n}$ as the image of a realization space of all $d$-dimensional polytopes in $\R^{d}$ with $n$ facets.
Hence, we do not elaborate on the different presentations of the realization space which can be found in the literature.
For more details on realization spaces consult for example the recent publication of Rastanawi, Sinn and Ziegler \cite{RastanawiSinnZiegler2021}.

\medskip
Theorem \ref{Th: Main 01} is a special case of Theorem \ref{Th: Main 02}.
Hence, in Section \ref{Sec: Facet volume vectors of polytopes}, we prove only Theorem  \ref{Th: Main 02} by combining classical results about polytopes with methods from differential geometry and topology. In fact, the case $d=2$ was also proven by Manecke and Sanyal in \cite[Proposition 3.8]{manecke2020inscribable}. For completeness, we include this case in our proof.

\medskip
The normalization, in the general case of a $d$ polytope with $n\geq d+1$ facets, with respect to the hyperplane $H_{n}:=\{(\alpha_1,\dots,\alpha_n)\in\R^n : \sum_{1\leq i\leq n}\alpha_i=1\}$, yields the following corollary.

\begin{corollary}
\label{Cor : Main - 02}
Let $d\geq 2$ and $n\geq d+1$ be an integer, $P_{n-1}:=\conv\big(\tfrac{2n}{n-2}\Delta_{n-1} \,\cup\, n(-\Delta_{n-1})\big)\subseteq W_{n}$, and let $P_{n-1}^*$ be its polar in $W_{n}$.
Then,
\begin{equation}
\label{Th: Main 03 - equality}
\V_{d,n}\cap H_{n} = \relint (P_{n-1}^*) +  \tfrac1{n}e
\qquad\text{and}\qquad
\V_{d,n}= \cone  \big( \relint (P_{n-1}^*) +  \tfrac1{n}e\big){\setminus}\{0\},
\end{equation}
where $e:=\sum_{1\leq i\leq n}e_i$, and $e_1,\dots,e_{n}$ is the standard basis of $\R^{n}$.
\end{corollary}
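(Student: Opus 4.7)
The plan is to adapt the proof of Corollary \ref{Cor : Main - 01} verbatim, with $d+1$ playing the role of $n$ and $d$ the role of $n-1$. Indeed, Theorem \ref{Th: Main 02} gives exactly the same shape of inequalities as Theorem \ref{Th: Main 01}, the only difference being the ambient dimension. Concretely, using that $\alpha_{\pi(n)} < \alpha_{\pi(1)} + \dots + \alpha_{\pi(n-1)}$ for all $\pi \in \Sym_n$ is equivalent to $\sum_{j\neq i}\alpha_j - \alpha_i > 0$ for each $1 \leq i \leq n$, I would first rewrite
\[
\V_{d,n} = \big\{\alpha\in \R^{n} \ : \ \langle \alpha,e_i\rangle>0,\ \langle \alpha,e-2e_i\rangle>0,\ 1\leq i\leq n\big\}.
\]

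Next, I would intersect with the hyperplane $H_n = \{\alpha \in \R^n : \langle \alpha, e \rangle = 1\}$. The constraint $\langle \alpha, e - 2e_i \rangle > 0$ together with $\langle \alpha, e \rangle = 1$ becomes $0 < \alpha_i < \tfrac{1}{2}$. I would then translate by $-\tfrac{1}{n}e$ to move the problem into the hyperplane $W_n$; writing $\beta = \alpha - \tfrac{1}{n}e \in W_n$ and using that $\langle \beta, e_i - \tfrac{1}{n}e\rangle = \beta_i$, the pair of inequalities become
\[
-\tfrac{1}{n} < \langle \beta, e_i - \tfrac{1}{n}e\rangle < \tfrac{n-2}{2n},\qquad 1\leq i\leq n.
\]

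Clearing denominators so that the right-hand sides equal $1$ rewrites these as
\[
\langle \beta, \tfrac{2n}{n-2}(e_i - \tfrac{1}{n}e)\rangle < 1 \qquad \text{and} \qquad \langle \beta, n\cdot(-(e_i - \tfrac{1}{n}e))\rangle < 1,
\]
for $1 \leq i \leq n$. The vectors $e_i - \tfrac{1}{n}e$ are exactly the vertices of $\Delta_{n-1}$ in $W_n$, so the $2n$ points appearing above are precisely the vertices of $P_{n-1} = \conv\big(\tfrac{2n}{n-2}\Delta_{n-1} \cup n(-\Delta_{n-1})\big)$. Therefore the cut-out region is $\relint(P_{n-1}^*)$, which gives the first equality of \eqref{Th: Main 03 - equality}. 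The second equality then follows by taking the cone (with apex removed) of the first, since $\V_{d,n}$ is already a cone by Theorem \ref{Th: Main 02}.

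Since the argument is entirely a sequence of elementary manipulations with scalar products and the polar duality, there is no real obstacle; the only thing to double-check is the correct scaling constants $\tfrac{2n}{n-2}$ and $n$, which are read off from the upper and lower bounds $\tfrac{n-2}{2n}$ and $-\tfrac{1}{n}$.
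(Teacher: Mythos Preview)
Your proposal is correct and follows essentially the same approach as the paper: the paper's own proof of Corollary~\ref{Cor : Main - 02} simply refers back to the computation in the proof of Corollary~\ref{Cor : Main - 01}, replacing $d+1$ by $n$ and invoking Theorem~\ref{Th: Main 02} in place of Theorem~\ref{Th: Main 01}, which is exactly what you do. Your scaling constants and scalar-product manipulations match the paper's line by line.
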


\medskip
The statement can be verified along the lines of the proof of Corollary \ref{Cor : Main - 01} where instead of Theorem~\ref{Th: Main 01} we use rather its generalization, Theorem \ref{Th: Main 02}.

\medskip
{\em What comes next?}
The fact that facet volumes of polytopes form a cone introduces a geometrically motivated stratification of the realization space of all $d$-dimensional polytopes in $\R^{d}$ with $n$ facets.
A strata of a realization space is defined to be a fiber of the facet volume map. For example, the facet volume map $\nu_2\colon \A(\R^2,3)\to \V_{2,3}$ induces a bijection between all affine isometry classes of triangles in the plane and the cone $\V_{2,3}$, that is~$\A(\R^2,3)/\mathrm{(O(2)\times \R^2)} \cong \V_{2,3}$.
Here~$\mathrm{O}(2)\times \R^2$ denotes the group of the affine Euclidean isometries in $\R^2$.


\medskip
Consider a facet volume map, from $\mathcal{R}_{d,n}$, the realization space  of all $d$-dimensional polytopes in $\R^d$ with $n$ facets, onto the cone $\V_{d,n}$, and its induced maps
\[
\mathcal{R}_{d,n}\ \twoheadrightarrow\ \mathcal{R}_{d,n}/(\mathrm{O}(d)\times \R^d)\ \twoheadrightarrow\ \V_{d,n},
\]
and
\[
\mathcal{R}_{d,n}\ \twoheadrightarrow\  \mathcal{R}_{d,n}/(\mathrm{O}(d)\times \R^d)\ \twoheadrightarrow\ \V_{d,d+1} \ \twoheadrightarrow\  \V_{d,n}/\Sym_n.
\]
The questions we should ask are:
What are the fibers of all, or any of, the maps in the compositions?
More concretely, what is the topology of strata of the realization space $\mathcal{R}_{d,n}$ or its orbit space $\mathcal{R}_{d,n}/(\mathrm{O}(d)\times \R^d)$?
A pioneered work, related to the proposed questions, for the case of arbitrary, not necessarily convex, polygons, considered up to a positive similarity, was done in 1998 by Kapovich and Millson \cite{KM1998}.

\bigskip
\noindent\textbf{Acknowledgements}
The authors would like to thank Raman Sanyal and and G\"unter Ziegler for several helpful discussions.

\section{Facet volume vectors of polytopes: Proof of Theorem \ref{Th: Main 02} }
\label{Sec: Facet volume vectors of polytopes}

Let $d\geq 2$ and $n\geq d+1$ be integers.
As before, let $e_1,\dots,e_{n}$ denote the the standard basis of $\R^{n}$ and set $e:=\sum_{1\leq i\leq n}e_i$.
As before, $\langle\cdot,\cdot\rangle$ stands for the standard scalar product of the Euclidean space $\R^{n}$.

\medskip
In this section we prove Theorem \ref{Th: Main 02} by showing that a vector $\alpha=(\alpha_1,\dots,\alpha_n) \in(\R_{>0})^{n}$ satisfies the system of inequalities
\[
\langle \alpha,e-2e_1\rangle>0, \ \dots\, , \  \langle \alpha,e-2e_n\rangle>0
\]
if and only if there exists a $d$-dimensional polytope $P$ in $\R^d$ with $n$ facets $F_1,\dots,F_n$ such that
\[
\alpha_1=\vol_{d-1}(F_1),\ \dots\, , \ \alpha_n=\vol_{d-1}(F_n).
\]
The proof of this equivalence has two natural parts. Sufficiency is considerably simpler to prove, while necessity turns out to be more challenging.

\subsection{Sufficiency}
\label{Subsec: Sufficiency}
Let $P$ be a $d$-dimensional polytope in $\R^d$ with $n$ facets $F_1,\dots,F_n$.
Denote by $u_i$ the outer unit normal of the facet $F_i$, and set $\alpha_i:=\vol_{d-1}(F_i)$, for $1\leq i\leq n$.
Hence,
$$F_i=P\cap\{ x\in \R^d : \langle x,u_i\rangle = h_P(u_i)\},$$
where $h_P\colon\R^d\to\R$ is the support function of the polytope $P$.
For basic properties of the support function of a convex body consult for example \cite[Section\,1.7.1]{Schneider1993}.

\medskip
A classical result of convex geometry relates the vectors $u_i$ with scalars $\alpha_i$ through the linear dependence
\begin{equation}
\label{relation between u_i's and y_i's -- 01}
	\sum_{1\leq i\leq n} \alpha_i u_i = 0,
\end{equation}
consult for example \cite[Lemma 5.1.1]{Schneider1993}.
For any fixed index $1\leq j\leq n$ the equality \eqref{relation between u_i's and y_i's -- 01} can be transformed into
$
\alpha_ju_j=-\sum_{1\leq i \leq n, i\neq j} \alpha_i u_i.
$
Applying the triangle inequality, and having in mind that $\alpha_i>0$ for all $1\leq i\leq n$, we get that
\begin{equation}
\label{relation between u_i's and y_i's -- 02}
\alpha_j\|u_j\| < \sum_{1\leq i \leq n, i\neq j}\alpha_i\|u_i\|.
\end{equation}
The inequality is strict because all the unit outer normal vectors of an arbitrary $d$-dimensional polytope in $\R^d$ are pairwise distinct.
Finally, we recall that all the vectors $u_i$ are unit vectors, and consequently the inequality \eqref{relation between u_i's and y_i's -- 02} becomes:
\[
\alpha_j  < \sum_{1\leq i \leq n, i\neq j}\alpha_i \ \Longleftrightarrow \ \langle \alpha,e-2e_j\rangle>0.
\]
Thus, we completed the sufficiency part of the proof.

\subsection{Necessity}
\label{Subsec: Necessity}
Let $\alpha=(\alpha_1,\dots,\alpha_n) \in(\R_{>0})^{n}$ be a given vector which satisfies the inequalities
\begin{equation}
\label{system of inequalities}
\langle \alpha,e-2e_1\rangle>0, \ \dots\, , \  \langle \alpha,e-2e_n\rangle>0.
\end{equation}
We prove that there exists a $d$-dimensional polytope $P$ in $\R^d$ with $n$ facets $F_1,\dots,F_n$ having the property that $\alpha_i=\vol_{d-1}(F_i)$ for every $1\leq i\leq n$.
For this it suffices to show that there exists a collection of pairwise distinct unit vectors $(u_1,\dots, u_n)$ which linearly span $\R^d$ and satisfy the linear relation
$
\sum_{1\leq i\leq n} \alpha_i u_i = 0
$.
Indeed, according to the Minkowski's existence theorem \cite[Theorem\,8.2.1]{Schneider1993}, for any collection of pairwise distinct unit vectors $(u_1,\dots, u_n)$ which linearly span $\R^d$ and a collection of positive real numbers $(\alpha_1,\dots, \alpha_n)$ satisfying the relation $\sum_{1\leq i\leq n} \alpha_i u_i = 0$ and \eqref{system of inequalities} there exists a $d$-dimensional polytope~$P$ with $n$ facets $F_1,\dots,F_n$ such that $u_i$ is the outer unit normal of the facet $F_i$ and $\alpha_i=\vol_{d-1}(F_i)$, for every $1\leq i\leq n$.

\medskip
Thus, we complete the proof of the necessity part by proving the following proposition.
Recall that the vector $\alpha=(\alpha_1,\dots,\alpha_n) \in(\R_{>0})^{n}$ is already fixed and satisfies the system of inequalities \eqref{system of inequalities}.

\begin{proposition}
\label{proposition necessity}
There exists a collection of pairwise distinct unit vectors $(u_1,\dots, u_n)\in (\R^d)^n$ with
\[
\spann\{u_1,\dots, u_n\}=\R^d\qquad\quad\text{and}\qquad\quad \sum_{1\leq i\leq n} \alpha_i u_i = 0.
\]
\end{proposition}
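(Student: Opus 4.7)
I would prove Proposition \ref{proposition necessity} in two stages: first produce \emph{some} tuple $(u_1,\dots,u_n)\in (S^{d-1})^n$ with $\sum_i \alpha_i u_i = 0$, and then perturb within the solution locus to enforce pairwise distinctness of the $u_i$ together with $\spann\{u_1,\dots,u_n\}=\R^d$. For the first stage, since $d\geq 2$, I embed $\R^2\hookrightarrow\R^d$ and look for all $u_i$ inside this $\R^2$. The task reduces to constructing a closed planar $n$-gon with consecutive edge lengths $\alpha_1,\dots,\alpha_n$; the unit edge vectors serve as the $u_i$. I argue by induction on $n$: the base case $n=3$ is the law-of-cosines construction of a triangle with prescribed side lengths, valid because the polygon inequalities are here the strict triangle inequalities. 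For the inductive step, after sorting $\alpha_1\geq\dots\geq\alpha_n$, I merge the two smallest edges into a single new edge of length $\alpha':=\alpha_{n-1}+\alpha_n-\varepsilon$ for sufficiently small $\varepsilon>0$, and verify directly that $(\alpha_1,\dots,\alpha_{n-2},\alpha')$ still satisfies the polygon inequalities (the strict original inequalities give the required slack). The inductive $(n-1)$-gon is then refined into an $n$-gon by splitting the $\alpha'$-edge into two consecutive edges of lengths $\alpha_{n-1},\alpha_n$ meeting at an appropriate interior angle.

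\textbf{Perturbation.} Consider the constraint set $M:=\{(u_1,\dots,u_n)\in (S^{d-1})^n : \sum_i \alpha_i u_i = 0\}$. At any configuration whose $u_i$ are not all collinear, the map $(u_i)\mapsto\sum \alpha_i u_i$ is a submersion onto $\R^d$ --- its differential sends $\prod_i u_i^\perp$ onto $\sum_i u_i^\perp = \R^d$ --- so $M$ is a smooth submanifold of dimension $n(d-1)-d$ there, and the output of the existence step is of this type. I then argue that the ``bad'' locus $M_{\mathrm{bad}}\subseteq M$, consisting of tuples with some $u_i=u_j$ or with $\spann\{u_1,\dots,u_n\}\neq\R^d$, is a proper Zariski-closed subset. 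For $d\geq 3$, a dimension count bounds the sub-locus of $M$ whose configurations lie in a common $2$-plane by $(n-2)+2(d-2)=n+2d-6$, which is strictly less than $\dim M=n(d-1)-d$ (the difference equals $(n-3)(d-2)\geq 1$ for $n\geq d+1$ and $d\geq 3$); and each pairwise coincidence $u_i=u_j$ is cut out by $d-1$ independent equations on $M$. For $d=2$ the existence step already yields $u_i$'s spanning $\R^2$ with pairwise distinct directions. In either case $M\setminus M_{\mathrm{bad}}$ is non-empty and supplies the required tuple.

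\textbf{Main obstacle.} The delicate part is the perturbation argument: starting from a solution whose $u_i$ all lie in a fixed $2$-plane of $\R^d$, one must exhibit tangent directions in $T_pM$ that tilt some $u_i$'s out of that plane while preserving the constraint $\sum \alpha_i u_i=0$, and simultaneously separate every coincident pair. Existence of such directions follows from the submersion property --- an infinitesimal rotation of a single $u_i$ into the orthogonal complement can be compensated by small linear adjustments of the remaining vectors in their tangent spheres --- and the dimension count above guarantees enough independent directions to escape every bad stratum. Organizing the perturbation uniformly so that the non-spanning locus and every coincidence locus are avoided in one stroke is the central technical task in the proof.
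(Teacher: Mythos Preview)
Your overall strategy is viable but differs from the paper's. The paper never constructs a planar polygon; instead it defines a test map $\varphi\colon\X=\prod_{i<n}\alpha_i S^{d-1}\to\R^d$, $(x_i)\mapsto\sum x_i$, uses a connectedness argument to show that $\varphi(\X)$ meets the sphere $S_n$ of radius $\alpha_n$, restricts $\varphi$ to the open submanifold $\XX\subseteq\X$ on which the rank of $[x_1\,\cdots\,x_{n-1}]$ is large enough for every value to be regular, and then works \emph{inside a fiber} $\psi^{-1}(w)$ with $w\in S_n$, exhibiting explicit tangent directions that escape the rank-deficient locus (simplex case) or the coincidence loci (polytope case). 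You instead build a point of $M$ by an elementary inductive polygon construction in a $2$-plane and argue by dimension that the bad locus in $M$ is thin. Your existence step is more hands-on and avoids any topology; both approaches ultimately rely on a local differential-geometric escape argument.

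There is one concrete slip in the perturbation step. Your dimension count $(n-2)+2(d-2)$ bounds the locus of tuples lying in \emph{some $2$-plane}, but for $d\geq 4$ that is not the non-spanning locus you must leave: a perturbation of a $2$-plane configuration can land in a $3$-plane (still non-spanning) without lying in any $2$-plane. What you actually need is the dimension of the locus of tuples lying in some hyperplane; the same Grassmannian-plus-fiber count gives it as $(d-1)+\big(n(d-2)-(d-1)\big)=n(d-2)$, which is $<n(d-1)-d=\dim M$ exactly because $n\geq d+1$. With this correction the argument closes: if you build your planar $n$-gon so that the edge directions are already pairwise distinct (so $p$ lies outside every coincidence locus, and a small neighborhood of $p$ in $M$ stays outside them), then $M$ is smooth of dimension $n(d-1)-d$ at $p$ by your submersion check, and a Zariski-closed subset of strictly smaller dimension cannot contain a Euclidean neighborhood of a smooth point. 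One minor addendum: the claim that the planar construction ``already yields pairwise distinct directions'' is not automatic from the induction as written, since the two directions produced by splitting the $\alpha'$-edge could coincide with an existing edge direction; a generic choice of $\varepsilon$ at each step avoids the finitely many forbidden values.
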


The proof of the proposition combines different ideas and techniques and therefore is divided into several steps which follow.
Before going into the proof let us note that the case of a triangle, that is $d=2$ and $n=3$, is satisfied.
Thus, in the following we assume that $d\geq 3$ when $n=d+1$.

\subsubsection{Space of solution candidates}

Let $1\leq i\leq n$ be an integer, and let $S_i$ denotes the sphere in $\R^d$ of radius $\alpha_i$, that is $S_i:=\{x\in\R^d \ : \  \langle x,x\rangle = \alpha_i^2\}$.
Further on, we consider the product of $n-1$ spheres
\[
\X:=S_1\times\dots\times S_{n-1}
\]
as a $(d-1)(n-1)$-dimensional submanifold of $(\R^d)^{n-1}$.
The tangent space of the product of spheres $\X$, at the point $x=(x_1,\ldots,x_{n-1})\in \X$, can be presented by
\[
\tau_x\X = \big\{y=(y_1,\ldots,y_{n-1})\in (\R^d)^{n-1} \ : \ \langle x_i,y_i\rangle=0,\, 1\leq i\leq n-1\big\}.
\]
Let us now see the vector space $(\R^d)^{n-1}$ as the space of all $d\times (n-1)$ matrices with real entries $M:=\Mat_{d\times (n-1)}(\R)$.
If the point $x\in \X$ is presented as the columns of the $d\times (n-1)$ matrix $X$ then, the tangent space of $\X$ at $X$ is given by
\begin{equation}
\label{tangent space 01}
\tau_X\X = \big\{Y\in M \ : \ (X^t\cdot Y)_{i,i}= 0,\, 1\leq i\leq n-1\big\},
\end{equation}
where $(X^t\cdot Y)_{i,j}$ denotes the $(i,j)$-entry of the matrix $X^t\cdot Y$.

\medskip
Next, we consider the variety of all singular matrices $M'$ in $M$, that is
\begin{equation}\label{definition corank 1 matrices}
M':=\{A\in M \ :\ \rank (A)\leq d-1\}.
\end{equation}
The variety $M'$ can be partitioned into a family of smooth manifolds which are given by fixing the rank, these are manifolds
\begin{equation}\label{definition rank r matrices}
M_r:=\{A\in M : \rank (A)=r \}
\end{equation}
for $0\leq r\leq d-1$.
Hence, $M'=\coprod_{0\leq r\leq d-1}M_r$ is a stratification of $M'$ into smooth manifolds.
Each $M_r$ is a smooth manifold of dimension $(d+n-1-r)r$ embedded in $M$.
In addition, we denote by $M''$ the singular part of the variety $M'$, that is
\begin{equation}\label{definition corank 2 matrices}
M'':=\{A\in M \ : \ \rank (A)\leq d-2\}=\coprod_{0\leq r\leq d-2}M_r.
\end{equation}
For more details on the manifolds of fixed rank consult for example  \cite[Section\,2.1]{Vandereycken2013} or  \cite[Sec.\,2.2]{UschmajewVandereycken2020}.

\medskip
Let $0\leq r\leq d-1$ be an integer, and let $X\in M_r$ be a fixed matrix.
We describe the tangent space $\tau_XM_r$ as it was done in \cite[Section\,2.1]{Vandereycken2013}.
The singular value decomposition of a rank $r$ matrix gives a presentation of $X$ in the form $U\cdot D\cdot V^t$ where
\begin{compactenum}[\rm (i)]
\item $U\in\Mat_{d\times r}(\R)$ is a matrix of rank $r$ with the property that $U^t\cdot U=\one_r$,
\item  $D=\diag(d_1,\dots,d_r)\in\Mat_{r\times r}(\R)$ is a diagonal matrix with diagonal entries $d_1\geq \dots\geq d_r>0$, and
\item  $V\in\Mat_{(n-1)\times r}(\R)$ is also a matrix of rank $r$ with the property that $V^t\cdot V=\one_r$.
\end{compactenum}
Here $\one_r$ denotes the $r\times r$ unit matrix.
The manifold $M_r$ can be described as follows:
\begin{multline*}
M_r=\big\{U\cdot D\cdot V^t \ : \ U\in\Mat_{d\times r}(\R), \, \rank(U)=r, \, U^t\cdot U=\one_r,
\\ V\in\Mat_{(n-1)\times r}(\R), \, \rank(V)=r, \, V^t\cdot V=\one_r, \\ D=\diag(d_1,\dots,d_r)\in \Mat_{r\times r}(\R),\, d_1\geq d_2\geq \dots\geq d_r>0	\big\}.
\end{multline*}
Then, for $X=U\cdot D\cdot V^t\in M_r$, as explained in \cite[Proposition\,2.1]{Vandereycken2013},  we have that
\begin{multline}\label{tangent space 02}
\tau_XM_r
=
\big\{
U\cdot Z\cdot V^t + U_p\cdot V^t + U\cdot V_p^t \ : \ Z\in\Mat_{r\times r}(\R), \\ U_p\in\Mat_{d\times r}(\R),\, U_p^t\cdot U=0,\,   V_p\in\Mat_{(n-1)\times r}(\R),\, V_p^t\cdot V=0
\big\} .
\end{multline}

\medskip
In the first auxiliary lemma we prove that the submanifolds $\X$ and $M_r$, $0\leq r\leq d-1$, intersect transversally, in symbols $\X\pitchfork M_r$.
This means that for every point in the intersection $X\in \X\cap M_r$ the sum of vector spaces $\tau_X\X+\tau_XM_r$, where $X$ is now considered as the origin, coincides with the tangent vector spaces $\tau_XM$ to the ambient manifold $M=(\R^{d})^{n-1}$ at the point $X$.
For more details of transversal intersections see for example \cite[Chapter\,6, p.\,143]{Lee2013}.

\begin{lemma}
\label{lemma : transversal}
$\X\pitchfork M_r$, for all $1\leq r\leq d-1$.
\end{lemma}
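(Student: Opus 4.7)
My plan is to verify $\X \pitchfork M_r$ at an arbitrary intersection point $X \in \X \cap M_r$ by passing to orthogonal complements in the ambient space $M = \Mat_{d\times(n-1)}(\R)$ equipped with the Frobenius inner product $\langle A, B\rangle := \trace(A^t\cdot B)$. Since $\tau_X\X + \tau_X M_r = M$ if and only if the two orthogonal complements meet only in $0$, it suffices to show
\[
(\tau_X\X)^{\perp}\cap (\tau_X M_r)^{\perp} \;=\; \{0\}.
\]

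The first step is to describe both normal spaces explicitly. From~\eqref{tangent space 01}, $\tau_X\X$ is the kernel of the surjective linear map $M\to\R^{n-1}$, $Y\mapsto \bigl((X^t\cdot Y)_{1,1},\ldots,(X^t\cdot Y)_{n-1,n-1}\bigr)$, so a short adjoint calculation gives
\[
(\tau_X\X)^{\perp} \;=\; \bigl\{X\cdot\Lambda \,:\, \Lambda\in\Mat_{(n-1)\times(n-1)}(\R)\text{ diagonal}\bigr\},
\]
that is, matrices whose $i$-th column is a scalar multiple of the $i$-th column $x_i$ of $X$. For the normal space to $M_r$ I would use the SVD presentation $X = U\cdot D\cdot V^t$ and pair a candidate $N$ against each of the three families of tangent vectors in~\eqref{tangent space 02}; this yields the standard characterization
\[
(\tau_X M_r)^{\perp} \;=\; \bigl\{N\in M \,:\, U^t\cdot N = 0 \ \text{ and } \ N\cdot V = 0\bigr\},
\]
which is well known in the literature on fixed-rank matrix manifolds, cf.\ \cite{Vandereycken2013}.

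The core of the argument is then a one-line computation. Suppose $N$ belongs to both normal spaces, so $N = X\cdot\Lambda$ for some diagonal $\Lambda$ and, simultaneously, $U^t\cdot N = 0$. Since $U^t\cdot X = U^t\cdot U\cdot D\cdot V^t = D\cdot V^t$, this gives
\[
0 \;=\; U^t\cdot N \;=\; U^t\cdot X\cdot\Lambda \;=\; D\cdot V^t\cdot\Lambda.
\]
The $r\times r$ diagonal matrix $D$ is invertible (this is where $r\geq 1$ enters), so $V^t\cdot\Lambda = 0$, and therefore $N = U\cdot D\cdot V^t\cdot\Lambda = 0$.

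I do not anticipate a genuine obstacle. The only mildly delicate step is correctly identifying $(\tau_X M_r)^{\perp}$, but this is routine once the SVD-based formula~\eqref{tangent space 02} is in hand. It is worth noting that neither the positivity nor any specific values of the radii $\alpha_i$ are used in this argument; the positivity of the $\alpha_i$, together with the inequalities~\eqref{system of inequalities}, enters only later, when one needs to show that suitable intersection points in $\X\cap (M\setminus M')$ exist.
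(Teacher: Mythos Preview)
Your argument is correct. Both proofs ultimately hinge on the same observation, namely that the normal space $(\tau_X\X)^{\perp}$ consists precisely of the diagonal column-rescalings $X\cdot\Lambda$, but the two proofs package this dually. The paper shows directly that each $X\cdot E_i$ lies in $\tau_X M_r$ by exhibiting an explicit decomposition $X\cdot E_i = U\cdot Z\cdot V^t + U\cdot V_p^t$ (with $Z=D\cdot V^t\cdot E_i\cdot V$ and $V_p^t = D\cdot V^t\cdot E_i\cdot(\one_{n-1}-V\cdot V^t)$), so that $(\tau_X\X)^{\perp}\subseteq\tau_X M_r$ and hence $\tau_X\X + \tau_X M_r \supseteq \tau_X\X \oplus (\tau_X\X)^{\perp}=M$. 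You instead invoke the standard formula for $(\tau_X M_r)^{\perp}$ and check $(\tau_X\X)^{\perp}\cap(\tau_X M_r)^{\perp}=\{0\}$ via the one-line computation $U^t\cdot(X\cdot\Lambda)=D\cdot V^t\cdot\Lambda$. Your route is slightly more economical since it avoids the explicit tangent-space decomposition, at the cost of importing the normal-space description of $M_r$ from the literature; the paper's route is more self-contained. Both arguments use $r\geq 1$ only through the invertibility of $D$, and neither uses the specific radii $\alpha_i$ beyond their positivity, exactly as you note.
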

\begin{proof}
Let  $X\in \X\cap M_r$, with $X=U\cdot D\cdot V^t$ for some $U\in\Mat_{d\times r}(\R)$, $V\in\Mat_{(n-1)\times r}(\R)$ and $D=\diag(d_1,\dots,d_r)\in\Mat_{r\times r}(\R)$, where $\rank(U)=r$, $\rank(V)=r$,  $U^t\cdot U=\one_r$,  $V^t\cdot V=\one_r$, and $d_1\geq d_2\geq \dots\geq d_r>0$.
We have seen in \eqref{tangent space 01} and \eqref{tangent space 02} than the tangent spaces to $S$ and $M_r$ at the point~$X$ can be described by:
\[
\tau_X\X = \big\{Y\in M \ : \ (X^t\cdot Y)_{i,i}= 0,\, 1\leq i\leq n-1\big\},
\]
and
\begin{align}\label{Tangent space rank r matrices}
\tau_XM_r
=
\big\{
U\cdot Z\cdot V^t + U_p\cdot V^t &+ U\cdot V_p^t \ : \ Z\in\Mat_{r\times r}(\R), \\ &U_p\in\Mat_{d\times r}(\R),\, U_p^t\cdot U=0,\,   V_p\in\Mat_{(n-1)\times r}(\R),\, V_p^t\cdot V=0
\big\} .\nonumber
\end{align}
Thus, in order to prove that $\X\pitchfork M_r$ we need to check that
$\tau_XM =\tau_X\X+ \tau_XM_r$.
Here $\tau_XM$ is a $d(n-1)$ dimensional real vector space, since $M$ is also a $d(n-1)$ dimensional real vector space.
Recall, $\dim\tau_X\X =(d-1)(n-1)$ and $\dim \tau_XM =(d+n-1-r)r$.

\medskip
Further on, let us denote by $E_{i}\in\Mat_{(n-1)\times (n-1)}(\R)$ the matrix with all entries zero except the entry $(i,i)$ which is assumed to be $1$.
We show that $X\cdot E_i\in \tau_x M_r$ for every $1\leq i\leq n-1$.
For that we note that $P_V:=V\cdot V^t$, considered as the linear map $\R^{n-1}\to \R^{n-1}, \ x\mapsto P_V\cdot x$, is a projection onto the subspace~$C$ spanned by the columns of the matrix $V$.
Indeed, $P_V\cdot V=(V\cdot V^t)\cdot V= V\cdot (V^t\cdot V)=V\cdot\one_r=V$, meaning that $P_V(x)=x$ for $x\in C$ , and for $x\in C^{\perp} \Leftrightarrow V^t\cdot x=0$ we have that $P_V(x)=(V\cdot V^t)\cdot x=V\cdot (V^t\cdot x)=0$.
We set:
\begin{align*}
Z&:= D\cdot V^t\cdot E_i\cdot V\in\Mat_{r\times r}(\R), \\ U_p&:=0\in \Mat_{d\times r}(\R), \text{ and}\\
V_p^t&:=D\cdot V^t\cdot E_i\cdot (\one_{n-1}-P_V)\in \Mat_{r\times (n-1)}(\R).
\end{align*}
Then, $U_p^t\cdot U=0$, and also $V_p^t\cdot V=D\cdot V^t\cdot E_i \cdot(\one_{n-1}-P_V)\cdot V = D\cdot V^t\cdot E_i \cdot(V-V)=0$.
Furthermore,
\begin{align*}
U\cdot Z\cdot V^t + U_p\cdot V^t + U\cdot V_p^t = U\cdot (Z\cdot V^t +  V_p^t)&= U\cdot (D\cdot V^t\cdot E_i \cdot V\cdot V^t +D\cdot V^t\cdot E_i \cdot (\one_{n-1} -P_v))\\
	&=U\cdot D\cdot V^t\cdot E_i \cdot(P_V+\one_{n-1}-P_V)\\
  &=(U\cdot D\cdot V^t)\cdot E_i =X\cdot E_i.
\end{align*}
Hence, $X\cdot E_i\in \tau_XM_r$.

\medskip
Let us present $X$ as the collection of its column vectors, say $X=[x_1 \ \dots \ x_{n-1}]$.
Hence, $X\cdot E_i$ is the matrix with all columns zero except the $i$th column which is $x_i$, that means $X\cdot E_i =[0 \ 0 \ \dots 0 \ x_i \ 0\ \dots \ 0]$.
Each vector $x_i$ belongs to the sphere $S_i$, and so cannot be zero implying that
\[
\dim\big(\spann\{X\cdot E_1,\dots, X\cdot E_{n-1}\}\big) = n-1.
\]
The subspaces $\tau_X\X$ and $\spann\{X\cdot E_1,\dots, X\cdot E_{n-1}\}\subseteq  \tau_XM_r$ are orthogonal with respect to the scalar product defined on $\tau_XM$ by $\langle\langle A,B\rangle\rangle:=\trace (A^t\cdot B)$.
Therefore,
\[
\tau_XM \ \supseteq \ \tau_X\X+ \tau_XM_r \ \supseteq \ \tau_X\X + \spann\{X\cdot E_1,\dots, X\cdot E_{n-1}\}\cong\tau_X\X \oplus \spann\{X\cdot E_1,\dots, X\cdot E_{n-1}\}.
\]
Since,
\begin{align*}
\dim
\big( \tau_X\X \oplus \spann\{X\cdot E_1,\dots, X\cdot E_{n-1}\}
\big)
&=
\dim (\tau_X\X)+\dim \big(\spann\{X\cdot E_1,\dots, X\cdot E_{n-1}\}
\big)
\\
&= (d-1)(n-1)+(n-1)\\
&=d(n-1)=\dim (\tau_XM),
\end{align*}
we proved that $\tau_XM =\tau_X\X+ \tau_XM_r$, and consequently $\X\pitchfork M_r$.
\end{proof}

\medskip
As a direct consequence of the previous lemma we determine the codimension of the each intersection~$\X \cap M_r$ in $\X$.

\begin{corollary}
\label{corollary 02}
$\codim_{M}(\X\cap M_r)=(d-r)(n-r-1)$, for all $1\leq r\leq d-1$.
\end{corollary}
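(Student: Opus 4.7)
The plan is to derive Corollary 2.3 as an immediate consequence of Lemma 2.2 via the standard dimension formula for transverse intersections. The main technical work (verifying transversality of the sphere product $\X$ and the rank stratum $M_r$) has already been completed in the previous lemma, so what remains is essentially a bookkeeping computation of dimensions.

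First I would collect the relevant dimensions already recorded in the preceding pages: the ambient manifold $M=\Mat_{d\times(n-1)}(\R)$ has dimension $d(n-1)$; the sphere product $\X=S_1\times\cdots\times S_{n-1}$ is a submanifold of codimension $n-1$ (each sphere contributes codimension $1$); and the rank stratum $M_r$ has dimension $(d+n-1-r)r$ as recalled just after \eqref{definition rank r matrices}. Next, I would invoke the standard transverse-intersection principle: if $X\pitchfork Y$ in an ambient $M$, then $X\cap Y$ is a smooth submanifold, and its codimension in $X$ equals the codimension of $Y$ in $M$ (equivalently, $\dim(X\cap Y)=\dim X+\dim Y-\dim M$). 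Applying this to the transversality $\X\pitchfork M_r$ established in Lemma \ref{lemma : transversal} yields
\[
\codim_{\X}(\X\cap M_r) \;=\; \codim_M(M_r) \;=\; d(n-1)-(d+n-1-r)r.
\]

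Finally I would finish with the elementary algebraic simplification
\[
d(n-1)-(d+n-1-r)r = dn-d-dr-nr+r+r^2 = (d-r)(n-r-1),
\]
which matches the claimed formula. I expect no real obstacle: the only subtle point is interpretive, namely that the $\codim_M$ notation in the statement is naturally read as the codimension inside the smaller ambient $\X$ (equivalently, inside $M_r$) rather than inside the full matrix space, since that is the quantity the transversality formula directly delivers and the one that will be used in subsequent arguments. If instead one reads it literally as codimension in $M$, one would simply add $\codim_M(\X)=n-1$ and the rest of the computation is unchanged.
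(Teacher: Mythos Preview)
Your proof is correct and follows the same route as the paper: invoke the transversality $\X\pitchfork M_r$ from Lemma~\ref{lemma : transversal}, apply the standard codimension formula for transverse intersections (the paper cites \cite[Theorem~6.30]{Lee2013}), and simplify the resulting expression. Your interpretive remark is well placed---the sentence immediately preceding the corollary states that the codimension is taken ``in $\X$,'' so the subscript $M$ in the statement is a slip, and indeed the paper's own displayed chain $\codim_M(\X)+\codim_M(M_r)=(d-r)(n-r-1)$ does not balance numerically unless one reads the intended quantity as $\codim_{\X}(\X\cap M_r)=\codim_M(M_r)$, exactly as you compute.
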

\begin{proof}
According to Lemma \ref{lemma : transversal} we have seen that $\X\pitchfork M_r$, and so by \cite[Theorem\,6.30]{Lee2013} follows that
\(
\codim_M(\X\cap M_r)=\codim_M(\X)+\codim_M(M_r)=(d-r)(n-r-1).
\)
\end{proof}

The complement $\X{\setminus}M'$ is the space of all collections of $n-1$ vectors $(x_1,\dots, x_{n-1})$ which span the ambient vector space $\R^d$ and each vector is of the prescribed norm $\|x_i\|=\alpha_i$ for $1\leq i\leq n-1$.
In other words, these are the candidates, up to a scaling, for the first $n-1$ vectors, out of $n$, whose existence is claimed by Proposition \ref{proposition necessity}.
Hence, we call $\X{\setminus}M'$ the {\em space of solution candidates}.

\subsubsection{The test map}

Let $\varphi\colon\X\to\R^d$ be the restriction of the linear map
\[
(\R^d)^{n-1}\to \R^d,
\qquad
(x_1,\ldots,x_{n-1})\mapsto\sum_{1\leq i\leq n-1}x_i,
\]
onto the product of spheres $\X=S_1\times\dots\times S_{n-1}$.
The first property of the map $\varphi$ we show is that its image $\varphi(\X)$ intersects the sphere $S_n\subseteq\R^d$.

\begin{lemma}
\label{lemma : intersection}
$\varphi(\X)\cap S_n\neq \emptyset$.
\end{lemma}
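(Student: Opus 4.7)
My plan is to view $\varphi(\X)$ as the Minkowski sum $S_1+\cdots+S_{n-1}$ of centered spheres in $\R^d$ and show that it equals the closed spherical annulus $A(r,R):=\{x\in\R^d:r\leq \|x\|\leq R\}$, where $R=\sum_{i=1}^{n-1}\alpha_i$ and $r=\max(0,\,2\alpha_{\max}-R)$ with $\alpha_{\max}:=\max\{\alpha_i:1\leq i\leq n-1\}$. The sphere $S_n$ will then lie inside $\varphi(\X)$ by the inequalities \eqref{system of inequalities}.

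For the inclusion $\varphi(\X)\subseteq A(r,R)$, the outer bound $\|\sum x_i\|\leq R$ is just the triangle inequality, and the inner bound $\|\sum x_i\|\geq 2\alpha_{\max}-R$ follows from the reverse triangle inequality applied to the summand of largest norm: if $x_{i_0}$ has norm $\alpha_{\max}$, then $\|\sum x_i\|\geq \|x_{i_0}\|-\|\sum_{i\neq i_0}x_i\|\geq \alpha_{\max}-(R-\alpha_{\max})$. For the reverse inclusion $A(r,R)\subseteq \varphi(\X)$, I would use that $\varphi(\X)$ is compact, connected (as a continuous image of the connected compact product $\X$), and $\OO(d)$-invariant (since each $S_i$ is centered at the origin). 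Hence it is a union of concentric spheres whose radii form a compact connected, and therefore interval, subset of $[r,R]$; it suffices to show that the extreme values $r$ and $R$ are attained. The radius $R$ is realised by aligning every $x_i$ with one common unit vector. If $r=2\alpha_{\max}-R>0$, one realises $r$ by taking $x_{i_0}=\alpha_{\max}u$ for an arbitrary unit vector $u$ and $x_i=-(\alpha_i/\alpha_{\max})x_{i_0}$ for $i\neq i_0$. If $r=0$, one invokes the classical polygon-closure fact: when $\alpha_{\max}\leq\sum_{i\neq i_0}\alpha_i$, all polygon inequalities among $\alpha_1,\dots,\alpha_{n-1}$ hold, so these lengths can be realised as consecutive edges of a closed polygon in any fixed $2$-plane $\Pi\subseteq\R^d$, giving $\sum x_i=0$.

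It remains to check $r<\alpha_n<R$ using \eqref{system of inequalities}. The bound $\alpha_n<R$ is exactly $\langle\alpha,e-2e_n\rangle>0$. For the lower bound, let $i_\star\in\{1,\dots,n\}$ achieve $\max\{\alpha_1,\dots,\alpha_n\}$: if $i_\star=n$ then $\alpha_{\max}\leq\alpha_n$ and $2\alpha_{\max}-R\leq 2\alpha_n-R<\alpha_n$; if $i_\star\leq n-1$ then $\alpha_{\max}=\alpha_{i_\star}$, and $\langle\alpha,e-2e_{i_\star}\rangle>0$ rearranges to $2\alpha_{\max}-R<\alpha_n$. In either case $r<\alpha_n<R$, so $S_n\subseteq A(r,R)=\varphi(\X)$, whence $\varphi(\X)\cap S_n\neq\emptyset$.

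The main obstacle I anticipate is the polygon-closure step used to hit the inner radius when $r=0$; this is where the full strength of \eqref{system of inequalities} (applied to indices other than $n$) is used. Once the annulus description of $\varphi(\X)$ is in hand, the remainder is just a direct translation between the radii $r,R$ and the inequalities \eqref{system of inequalities}.
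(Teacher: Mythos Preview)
Your proof is correct and in fact establishes more than is needed: you determine the full image $\varphi(\X)$ as the annulus $A(r,R)$, whereas the paper only shows that $\varphi(\X)$ meets $S_n$. The paper's argument is more direct and entirely self-contained: after ordering $\alpha_1\leq\cdots\leq\alpha_n$, it exhibits two explicit points $w_0=(\sum_{i<n}\alpha_i)v$ and $w_1=(\sum_{i<n}(-1)^i\alpha_i)v$ in $\varphi(\X)$ lying along a single line, and checks via an elementary interval-packing argument that $\|w_0\|>\alpha_n>\|w_1\|$; connectedness of $\varphi(\X)$ then forces an intersection with $S_n$. Thus the paper replaces your polygon-closure step by a clever alternating-sign construction, so it never needs to hit the inner radius exactly. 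Your approach is more structural---the $\mathrm{O}(d)$-invariance reduces everything to a one-dimensional interval of radii, and the annulus description $\varphi(\X)=A(r,R)$ is a clean statement in its own right---at the cost of invoking the (standard, easy) polygon-closure fact for the case $r=0$. That fact is not a real obstacle, so your concern there is unwarranted; both routes work, and yours yields a sharper conclusion.
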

\begin{proof}
Let $v\in S^{d-1} = S_n$ be an arbitrary unit vector in $\R^d$.
Without loss of generality we can assume that the values $\alpha_i$ are ordered in the non-decreasing order $0<\alpha_1\leq \alpha_2\leq \dots\leq \alpha_n$.
Consider the vectors
\[
w_0\ :=\ \varphi\big(\alpha_1v,\dots,\alpha_{n-1}v)=(\alpha_1+\dots+\alpha_{n-1}\big)v,
\]
and
\begin{align*}
w_1\ :=\ & \varphi(-\alpha_1v,\alpha_2v,\dots,(-1)^{n-2}\alpha_{n-2}v,(-1)^{n-1}\alpha_{n-1}v)\\
=\ & \big(-\alpha_1+\alpha_2-\dots+(-1)^{n-2}\alpha_{n-2}+(-1)^{n-1}\alpha_{n-1}\big)v.
\end{align*}
From the assumption \eqref{system of inequalities} we can estimate the norm of $w_0$ by:
\[
\|w_0\|=\alpha_1+\dots+\alpha_{n-1}>\alpha_n.
\]
In the case of the vector $w_1$ we have that
\[
\|w_1\|=|-\alpha_1+\alpha_2-\dots+(-1)^{n-2}\alpha_{n-2}+(-1)^{n-1}\alpha_{n-1}|<\alpha_n.
\]
Indeed, if $n$ is odd then, from the assumption on the ordering of values of $\alpha_i$'s, we see that
\[
\|w_1\|=|-\alpha_1+\alpha_2-\dots-\alpha_{n-2}+\alpha_{n-1}|
= (\alpha_2-\alpha_1)+\cdots+ (\alpha_{n-1}-\alpha_{n-2}).
\]
Consider the differences  $\alpha_2-\alpha_1$,\dots, $\alpha_{n-1}-\alpha_{n-2}$ as lengths of the intervals $[\alpha_1,\alpha_2]$,\dots, $[\alpha_{n-2},\alpha_{n-1}]$, all contained in the interval $[0,\alpha_n]$.
Since these intervals are either disjoint or intersect in a boundary point, and in addition $\alpha_0>0$ we see that $\|w_1\|<\alpha_n$.
On the other hand, if $n$ is even then,
\begin{align*}
\|w_1\|&=|-\alpha_1+\alpha_2-\dots+(-1)^{n-2}\alpha_{n-2}+(-1)^{n-1}\alpha_{n-1}|\\&=|(\alpha_2-\alpha_1)+\cdots+ (\alpha_{n-2}-\alpha_{n-3})-\alpha_{n-1}|.
\end{align*}
Using the same reasoning as in the case when us $n$ odd, now for the sequence of values $\alpha_1,\dots,\alpha_{n-1}$, we have that $0\leq (\alpha_2-\alpha_1)+\cdots+ (\alpha_{n-2}-\alpha_{n-3})<\alpha_{n-1}$.
Consequently, $\|w_1\|<\alpha_n$.

\medskip
The complement $\R^d{\setminus}S_n$ is a disconnected space with two connected components
\[
C_0:=\{ w\in\R^d \ : \ \| w\|>\alpha_n\}
\qquad\text{and}\qquad
C_1:=\{ w\in\R^d \ : \ \| w\|<\alpha_n\},
\]
which are also its path-connected components.
We have seen that $w_0\in C_0$ and $w_1\in C_1$.
Thus, $\varphi(\X)\cap C_0\neq\emptyset$ and $\varphi(\X)\cap C_1\neq\emptyset$.
The map $\varphi$ is continuous, $\X$ is path-connected, and therefore its image $\varphi(\X)$ is also path-connected.
Consequently, $\varphi(\X)$ cannot be contained in $\R^d{\setminus}S_n$ and intersect non-trivially both connected components.
Hence, $\varphi(\X)\cap S_n\neq 0$.
\end{proof}

\medskip
As a direct consequence of the previous lemma we get the following corollary which would already give us Proposition \ref{proposition necessity} if we did not require the vectors spanning the ambient $\R^d$ to be pairwise distinct.
For this reason we call the map $\varphi$, as well as its restrictions, a {\em test map} for our problem.

\begin{corollary}
There exists a collection of  unit vectors $(u_1,\dots, u_n)\in (\R^d)^n$ such that \[
\qquad\quad \sum_{1\leq i\leq n} \alpha_i u_i = 0.
\]
\end{corollary}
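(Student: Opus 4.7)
The plan is to read off the vectors directly from the intersection point produced by Lemma \ref{lemma : intersection}. By that lemma there exists a point $x=(x_1,\dots,x_{n-1})\in\X$ with $\varphi(x)\in S_n$; this is the only substantive input needed, so the remainder of the argument is a bookkeeping step and no real obstacle is anticipated.

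First I would use the definition of $\X=S_1\times\cdots\times S_{n-1}$ to write each coordinate as $x_i=\alpha_i u_i$ for a uniquely determined unit vector $u_i\in S^{d-1}$, for $1\leq i\leq n-1$. Then by definition of $\varphi$ we have
\[
\sum_{1\leq i\leq n-1}\alpha_i u_i \ = \ \sum_{1\leq i\leq n-1} x_i \ = \ \varphi(x)\in S_n,
\]
so this sum is a vector of norm $\alpha_n$. Writing $\varphi(x)=\alpha_n v$ for the unit vector $v:=\tfrac{1}{\alpha_n}\varphi(x)\in S^{d-1}$, I would set $u_n:=-v\in S^{d-1}$.

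With these definitions, all the $u_i$ are unit vectors and
\[
\sum_{1\leq i\leq n}\alpha_i u_i \ = \ \Big(\sum_{1\leq i\leq n-1}\alpha_i u_i\Big) + \alpha_n u_n \ = \ \alpha_n v - \alpha_n v \ = \ 0,
\]
which is exactly the desired relation. This completes the proof; the spanning condition and the pairwise-distinctness condition that are required in Proposition \ref{proposition necessity} are not needed here, and are precisely the further refinements that the remainder of Section \ref{Subsec: Necessity} must address.
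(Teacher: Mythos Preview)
Your proof is correct and follows essentially the same approach as the paper: both invoke Lemma~\ref{lemma : intersection} to obtain a point $x\in\X$ with $\varphi(x)\in S_n$, then normalize the $x_i$ and $-\varphi(x)$ to produce the desired unit vectors. The only difference is notational (you write $x_i=\alpha_i u_i$ where the paper writes $u_i=x_i/\|x_i\|$), and your closing remark about what remains for Proposition~\ref{proposition necessity} is apt.
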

\begin{proof}
As we have seen in  Lemma \ref{lemma : intersection} the intersection $\varphi(\X)\cap S_n$ is non-empty.
This implies that there exists $(x_1,\ldots,x_{n-1})\in \X$ with $\|\varphi (x_1,\ldots,x_{n-1})\|=\alpha_n$.
Taking
\[
u_1:=\tfrac{x_1}{\|x_1\|},\ \dots,\ u_{n-1}:=\tfrac{x_{n-1}}{\|x_{n-1} \|},\ u_n:=-\tfrac{\varphi (x_1,\ldots,x_{n-1})}{\|\varphi (x_1,\ldots,x_{n-1})\|}
\]
we get that $\sum_{1\leq i\leq n} \alpha_i u_i = 0$.
\end{proof}
\medskip
In order, find the collection of vectors $(u_1,\dots, u_n)$ which satisfy the desired properties we study further the map $\varphi$ and its restrictions. Recall from (\ref{definition corank 1 matrices}) and (\ref{definition corank 2 matrices}) that we denote by $M'$ the variety of $d\times (n-1)$ matrices of rank at most $d-1$ and by
$M''$ the variety of matrices of rank at most $d-2$.
Let $\XX$ be the open $d(n-1)$-dimensional manifold given by
\[
\XX:=
\begin{cases}
	\X{\setminus}M'', &\quad d\geq 3, \ n=d+1,\\
	\X{\setminus}M',  &\quad n\geq d+2,
\end{cases}
\]
and let us denote by $\psi\colon\XX\to\R^d$ the restriction of $\varphi$ to $\XX$, that is $\psi:=\varphi|_{\XX}$.

\begin{lemma}
\label{lemma : regular value}
Every point contained in the image of $\psi$ is a regular value of $\psi$.
\end{lemma}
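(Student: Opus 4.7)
The plan is to prove the stronger statement that every point of $\XX$ is a regular point of $\psi$; the lemma will then follow immediately, since the preimage of any value of $\psi$ then consists entirely of regular points.

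First, I would use that $\psi$ is the restriction to $\XX\subseteq\X\subseteq M$ of the global linear map $L\colon M\to\R^d$, $L(x_1,\ldots,x_{n-1})=\sum_{i=1}^{n-1}x_i$, so its differential at a point $X=(x_1,\ldots,x_{n-1})\in\XX$ is simply $L$ restricted to the tangent space $\tau_X\X$. Combining this with the description of $\tau_X\X$ recorded in \eqref{tangent space 01} yields
\[
\im(d\psi_X) \;=\; \{y_1+\cdots+y_{n-1}\,:\,\langle x_i,y_i\rangle=0,\ 1\leq i\leq n-1\} \;=\; x_1^\perp + \cdots + x_{n-1}^\perp,
\]
and by the standard duality between sums and intersections of linear subspaces this equals $\bigl(\R x_1\cap\cdots\cap\R x_{n-1}\bigr)^{\perp}$.

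Next, I would observe that this image is all of $\R^d$ exactly when $\bigcap_{i=1}^{n-1}\R x_i=\{0\}$, which is equivalent to the existence of two linearly independent vectors among $x_1,\ldots,x_{n-1}$, i.e.\ to $\rank(X)\geq 2$. I would then finish by reading off the rank bound directly from the definition of $\XX$: if $n=d+1$ (with $d\geq 3$) then $\XX=\X\setminus M''$ forces $\rank(X)\geq d-1\geq 2$, while if $n\geq d+2$ then $\XX=\X\setminus M'$ forces $\rank(X)=d\geq 2$. In either case $d\psi_X$ is surjective, so $X$ is a regular point.

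I do not expect any substantive obstacle here: once the tangent-space formula is in hand, the surjectivity of $d\psi_X$ reduces to the elementary identity $\sum_i V_i = (\bigcap_i V_i^\perp)^\perp$, and the rank bound needed is precisely what was built into the definition of $\XX$. The only point worth being careful about is the boundary case $n=d+1$, where the assumption $d\geq 3$ (recall $d=2$ was handled separately) is exactly what is needed to guarantee that, after removing the corank-$\geq 2$ locus $M''$, one still has $\rank(X)\geq 2$.
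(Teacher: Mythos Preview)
Your proposal is correct and follows essentially the same route as the paper's proof: both compute $\im(d\psi_X)=x_1^\perp+\cdots+x_{n-1}^\perp$ and then use the rank lower bound built into the definition of $\XX$ to conclude surjectivity. The only cosmetic difference is that you phrase the last step via the duality $\sum_i x_i^\perp=(\bigcap_i \R x_i)^\perp$, whereas the paper argues directly that a sum of at least two distinct hyperplanes is all of $\R^d$; these are the same observation.
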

\begin{proof}
Consider first the case of a simplex, that is $d\geq 3$ and $n=d+1$.
Pick $w\in\im(\psi)=\varphi(\X{\setminus}M'')$ and $x=(x_1,\dots,x_{n-1})\in\psi^{-1}(\{w\})\subseteq \XX$.
From Lemma \ref{lemma : transversal} we have that $\dim(\X\cap M'')<\dim(\X)$ because $\X\pitchfork M_r$ for every $1\leq r\leq n-1$.
Hence,
\begin{equation}
\label{eq : 101}
\tau_x\XX=\tau_x(\X{\setminus}M'')=\tau_x\X= \big\{y=(y_1,\ldots,y_{n-1})\in (\R^d)^{n-1} \ : \ \langle x_i,y_i\rangle=0,\, 1\leq i\leq n-1\big\}.
\end{equation}
Then, since $\psi$ is a restriction of a linear map we have that the differential $(D\psi)_x\colon \tau_x\XX\to \tau_w\R^d$ is given by $y=(y_1,\ldots,y_{n-1})\mapsto y_1+\dots+y_{n-1}$.

\medskip
Now, we show that  $(D\psi)_x$ is surjective; for relevant definitions see for example \cite{Lee2013}.
For this we denote by $H_i:=(\spann\{x_i\})^{\perp}$, $1\leq i\leq n-1$, the orthogonal complement hyperplane of the vector $x_i$ in~$\R^d$.
Then,
\[
\im (D\psi)_x=H_1+\dots+H_{n-1}\subseteq \tau_w\R^d.
\]
Since $x\in\psi^{-1}(\{w\})\subseteq \XX=\X{\setminus}M''$ and $d\geq 3$ we have that
$
\rank (x)=\rank [x_1 \cdots x_{n-1}]\geq d-1\geq 2
$,
implying that not all of the hyperplanes $H_1,\dots,H_{n-1}$ coincide.
Consequently, $\dim(H_1+\dots+H_{n-1})=d$ and so $\im (D\psi)_x=H_1+\dots+H_{n-1}=\tau_w\R^d$, implying that $(D\psi)_x$ is surjective.
Since $x$ was chosen arbitrary in $\psi^{-1}(\{w\})$ we have proved that $w$ is a regular value of $\psi$.

\medskip
In the case $n\geq d+2$ we proceed in the same way as in the case of a simplex with the only difference that now $x\in\psi^{-1}(\{w\})\subseteq \XX=\X{\setminus}M'$ implies that $\rank (x)=d\geq 2$.
Thus, once more we have that $w$ is a regular value of the map $\psi$.
\end{proof}

\medskip
Furthermore, an analogue of Lemma \ref{lemma : intersection} for the map $\psi$ holds.

\begin{lemma}
\label{lemma : intersection 02}
$\psi(\XX)\cap S_n\neq \emptyset$.
\end{lemma}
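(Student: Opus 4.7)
The plan is to upgrade Lemma \ref{lemma : intersection} from $\varphi$ on $\X$ to $\psi$ on $\XX$, using that $\XX$ is an open, dense, and path-connected subset of $\X$. Two ingredients suffice: density lets me move the witnesses of Lemma \ref{lemma : intersection} into $\XX$, and path-connectedness lets me apply the intermediate value theorem to the continuous function $t \mapsto \|\psi(\gamma(t))\|$ along a path $\gamma$ in $\XX$ joining the two witnesses.

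To secure the density and path-connectedness I would invoke Corollary \ref{corollary 02}, which asserts that each stratum $\X \cap M_r$ has codimension $(d-r)(n-r-1)$ in $\X$ (via the transversality of Lemma \ref{lemma : transversal}). In the case $n=d+1$, where $\XX=\X \setminus M''$ and $d\geq 3$, the top removed stratum $\X \cap M_{d-2}$ has codimension $2(n-d+1)=4$; in the case $n \geq d+2$, where $\XX=\X \setminus M'$, the top removed stratum $\X \cap M_{d-1}$ has codimension $n-d\geq 2$. In either case $\X \setminus \XX$ is a finite union of smooth submanifolds of codimension at least $2$ in $\X$, so $\XX$ is open and dense; by a standard transversality argument (any continuous path in the smooth manifold $\X$ can be approximated by a smooth path transverse to each stratum $\X \cap M_r$, and such a path misses every such stratum for dimensional reasons, as $\dim[0,1]+\dim(\X \cap M_r) < \dim \X$), $\XX$ is path-connected as well.

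With these in hand the argument proceeds as follows. Define the open sets $U_0:=\varphi^{-1}(C_0)=\{x\in \X : \|\varphi(x)\|>\alpha_n\}$ and $U_1:=\varphi^{-1}(C_1)=\{x\in \X : \|\varphi(x)\|<\alpha_n\}$; both are non-empty by Lemma \ref{lemma : intersection}, since the rank-one witnesses $(\alpha_1v,\dots,\alpha_{n-1}v)$ and the alternating-sign analogue constructed there lie in $U_0$ and $U_1$ respectively. By density of $\XX$ in $\X$, I can choose $x_0 \in U_0 \cap \XX$ and $x_1 \in U_1 \cap \XX$, so that $\|\psi(x_0)\|>\alpha_n$ and $\|\psi(x_1)\|<\alpha_n$. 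Path-connectedness of $\XX$ then provides a continuous path $\gamma \colon [0,1]\to \XX$ with $\gamma(0)=x_0$ and $\gamma(1)=x_1$, and the continuous real-valued function $t\mapsto \|\psi(\gamma(t))\|$ straddles the value $\alpha_n$, so the intermediate value theorem produces some $t^* \in (0,1)$ with $\psi(\gamma(t^*))\in S_n$.

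The principal technical hurdle is the path-connectedness of $\XX$, and more specifically verifying that the codimension of $\X \setminus \XX$ is at least $2$ in every case covered by the proposition. Once the stratified codimension bound from Corollary \ref{corollary 02} is used to confirm this, the perturbation argument is routine and the rest of the proof reduces to an application of the intermediate value theorem along a path joining the two witnesses supplied by the proof of Lemma \ref{lemma : intersection}.
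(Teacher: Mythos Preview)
Your proposal is correct and follows essentially the same route as the paper: both use the transversality result (via Corollary~\ref{corollary 02}) to conclude that $\X\setminus\XX$ has codimension at least~$2$ in $\X$, and then run an intermediate-value argument along a path joining the two witnesses from the proof of Lemma~\ref{lemma : intersection}. The only cosmetic difference is that the paper keeps the original rank-one endpoints $a,b\in\X\setminus\XX$ and selects a path $\gamma$ with $\gamma((0,1))\subseteq\XX$, whereas you first perturb the endpoints into $\XX$ by density and then use path-connectedness of $\XX$ directly.
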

\begin{proof}
From Lemma \ref{lemma : transversal} follows that both, $M''$ in the case $d\geq 3$, $n=d+1$, and $M'$ in the case $d\geq 2$, $n\geq d+2$, are of codimension at least $2$ in $\X$.
Consequently, the complement of $\XX$ in $\X$ is path-connected.

\medskip
Let $a:=(\alpha_1v,\dots,\alpha_{n-1}v)$ and $b:=(-\alpha_1v,\alpha_2v,\dots,(-1)^{n-2}\alpha_{n-2}v,(-1)^{n-1}\alpha_{n-1}v)$ be points in $\X$ considered in the proof of Lemma \ref{lemma : intersection}.
Then there exists a path $\gamma\colon [0,1]\to \X$ from $a=\gamma(0)$ to $b=\gamma(1)$ with the property that $\gamma((0,1))\subseteq \XX$. As in the proof of Lemma \ref{lemma : intersection}, we set $C_0:=\{ w\in\R^d \ : \ \| w\|>\alpha_n\}$ and $C_1:=\{ w\in\R^d \ : \ \| w\|<\alpha_n\}$.
Since $w_0=\varphi(a) \in C_0$ and $w_1=\varphi(b) \in C_1$, and $C_0$ and $C_1$ are the path-connected components of the complement $\R^d{\setminus}S_n$, we can find $t\in (0,1)$ with the property that~$\varphi(\gamma(t))\in S_n$.
Hence, $\psi(\XX)\cap S_n\neq \emptyset$.
\end{proof}

\subsubsection{Proof of Proposition \ref{proposition necessity}}
We are going to prove the existence of a collection of pairwise distinct unit vectors $(u_1,\dots, u_n)\in (\R^d)^n$ with the property that \[
\spann\{u_1,\dots, u_n\}=\R^d\qquad\quad\text{and}\qquad\quad \sum_{1\leq i\leq n} \alpha_i u_i = 0.
\]
For that we show the existence of another  collection of vectors $x=(x_1,\dots,x_{n-1})\in\X$ with the property that
 \begin{equation}
 \label{proof of prop: 01}
 x\notin M'
 \qquad\text{and}\qquad
 \varphi(x)\in S_n
 \qquad\text{and}\qquad
 \big(\tfrac{x_1}{\|x_1\|},\dots,\tfrac{x_{n-1}}{\|x_{n-1}\|},-\tfrac{\varphi(x)}{\| \varphi(x)\|}\big)\in\conf (S^{d-1},n),
 \end{equation}
 where $\conf (S^{d-1},n)\subseteq (S^{d-1})^n$ denotes the ordered configuration space of $n$ pairwise distinct points on the sphere $S^{d-1} = \{x\in\mathbb R^d : \langle x,x\rangle = 1\}$.
 If $x\in\X$ does not satisfy conditions \eqref{proof of prop: 01} then, either
 \begin{compactenum}[\rm (i)]
 \item  $x\in  M'$, or
 \item  $\varphi(x)\notin S_n$, or
 \item 	$\tfrac{x_i}{\|x_i\|}=\tfrac{x_j}{\|x_j\|} \Leftrightarrow \alpha_jx_i=\alpha_ix_j$ for some $1\leq i<j\leq n-1$, or
 \item  $\tfrac{x_i}{\|x_i\|}=-\tfrac{\varphi(x)}{\| \varphi(x)\|}$ for some $1\leq i\leq n-1$.
 \end{compactenum}
We will find a point $x\in\X$ satisfying none of the properties (i)--(iv).
In the following, as before, for the collection of vectors $x$ we also use the matrix notation $X$ when convenient.

\medskip
According to the Lemma \ref{lemma : regular value} and Lemma \ref{lemma : intersection 02} there exists a regular value $w$ of the map $\psi$ which belongs to the sphere $S_n$, that is $w\in S_n$.
We proceed by applying different strategies in the case of simplices, and then in the case of arbitrary polytope. Recall from (\ref{definition rank r matrices}) that $M_r$ denotes the manifold of $d\times (n-1)$ matrices of rank equal to $r$.

\bigskip
{\sc (1)}
We start with the case of simplices.
For this, let $d\geq 3$ and $n=d+1$ be integers.
Take an arbitrary $x \in\psi^{-1}(\{w\})$, then condition (ii) does not hold.
If $\rank(x)=d=n-1$, or equivalently $x\in \XX{\setminus}M'$, condition (i) is not satisfied, and conditions (iii) and (iv) do not hold because they contradict the assumption that vectors $x_1,\ldots,x_{n-1}$  are linearly independent.
Thus, if $\rank(x)=d$ the proof is complete, otherwise we assume that $x\in \XX{\setminus}M_d=M_{d-1}$ and show that there exists an another point in~$\psi^{-1}(\{w\})$ whose rank is maximal.

\medskip
We start with $x\in \XX{\setminus}M_d=\X\cap M_{d-1}$, and assume the contrary, that $\psi^{-1}(\{w\})\subseteq M_{d-1}$.
Since $w$ is a regular value of $\psi$ the preimage  $\psi^{-1}(\{w\})$ is a smooth submanifold of $\XX$ of codimension $d$.
Then, the tangent space of $\psi^{-1}(\{w\})$ at the point $x$ is $\tau_x\psi^{-1}(\{w\})=\ker (D\psi)_x$, see for example \cite[Theorem\,A.9]{Buergisser2013}.
Note that the assumption $\psi^{-1}(\{w\})\subseteq M_{d-1}$ implies that $\tau_x\psi^{-1}(\{w\})\subseteq \tau_xM_{d-1}$.
Hence, we consider next the tangent space $\tau_xM_{d-1}$ of $M_{d-1}$ at $x$.
This time we describe it using its normal (bundle) subspace with respect to the ambient $M=(\R^{d})^{n-1}$ and the scalar product we already used, $\langle\langle A,B\rangle\rangle:=\trace (A^t\cdot B)$ for $A,B\in M$.
That is,
\[
\nu_xM_{d-1}=\{Y\in M : (\forall Z\in\tau_xM_{d-1} ) \ \langle\langle Y,Z \rangle \rangle=0\}.
\]
Alternatively, the normal space can be described as follows.
Let us consider the point $x$ as a matrix $X\in M_{d-1}$ and present it as a product $X=U\cdot V^{t}$ where $U\in\Mat_{d\times (d-1)}(\R)$, $V\in\Mat_{d\times (d-1)}(\R)$ and $\rank(U)=\rank(V)=d-1$.
Furthermore, denote by $U^{\perp}\subseteq\R^d$ and $V^{\perp}\subseteq\R^{d}$ the subspaces orthogonal to the column spans of the matrices $U$ and $V$, respectively.
Note than $\dim (U^{\perp})=\dim (V^{\perp})=1$.
We claim that
\begin{equation}\label{Normal space rank r matrices}
\nu_xM_{d-1}=\spann \{u\cdot v^{t} : u\in U^{\perp},  v\in V^{\perp}\}.
\end{equation}
We use the description of $\tau_xM_{d-1}$ in (\ref{Tangent space rank r matrices}) to see that the right-hand side in (\ref{Normal space rank r matrices}) is indeed perpendicular to $\tau_xM_{d-1}$. This shows that the right-hand side of (\ref{Normal space rank r matrices}) is included in  $\nu_xM_{d-1}$.
Equality follows, because $M_{d-1}$ is of codimension $1$ in $\Mat_{d\times d}(\R)$ and so both sides are of dimension $1$.

\medskip
To complete the proof we will find a vector $Y$ which belongs to $\tau_x\psi^{-1}(\{w\})$ but is not in $\tau_xM_{d-1}$, contradicting the assumption that $\tau_x\psi^{-1}(\{w\})\subseteq \tau_xM_{d-1}$.
For that we fix two unit vectors $u_0\in U^{\perp}$ and $v_0\in V^{\perp}$, and set $\varepsilon:=(1,1,\dots,1)\in\R^{n-1}$.
First, we observe that
\[
X^t\cdot u_0=(U\cdot V^t)^t\cdot u_0= V\cdot (U^t\cdot u_0)=0
\]
which in particular implies that $x_i^t\cdot u_0=0$ for all $1\leq i\leq d$.
Next, we consider the vector $Y_{\lambda}:=(\lambda_1u_0,\dots,\lambda_{d}u_0)=u_0\cdot\lambda^t$ for an arbitrary choice of the vector $\lambda:=(\lambda_1,\dots,\lambda_{d}) \in \R^d$.
According to~\eqref{eq : 101}, we have that $Y_{\lambda}\in \tau_x\XX=\tau_x(\X{\setminus}M'')$ because $\langle x_i,\lambda_iu_0\rangle= x_i^t\cdot ( \lambda_i u_0 )= \lambda_i ( x_i^t\cdot u_0)=0$.
On the other hand, if the coordinates of the vector $\lambda$ sum to zero, then $Y_{\lambda}\in \ker (D\psi)_x=\tau_x\psi^{-1}(\{w\})$ because,
\[
(D\psi)_x(Y_{\lambda})=(D\psi)_x(\lambda_1u_0,\dots,\lambda_{d}u_0)=(\lambda_1+\dots+\lambda_{d})u_0=0.
\]
Next, $\psi(x)=X\cdot\varepsilon \neq 0$ because  $\psi(x)\in S_n$, and $X\cdot v_0=(U\cdot V^{t})\cdot v_0=0$, implying the linear independence of $\varepsilon^t$ and $v_0$.
Therefore, we can find a vector $\lambda_0\in \R^{d}$ with the property that $\langle \lambda_0,\varepsilon \rangle= \lambda_0^t \cdot \varepsilon =0$ and  $\langle \lambda_0, v_0 \rangle =\lambda_0^t \cdot v_0\neq 0$ since the orthogonal complements of $\varepsilon$ and $v_0$ do not coincide.
Then we have that
\begin{align*}
\langle \langle Y_{\lambda_0},u_0\cdot v_0^t\rangle\rangle=
\trace\big( Y_{\lambda_0}^t\cdot (u_0\cdot v_0^t)\big)=
\trace\big( Y_{\lambda_0}\cdot (u_0\cdot v_0^t)^t\big)
&= \trace\big( u_0\cdot \lambda_0^t \cdot  v_0 \cdot u_0^t\big)\\
&= (\lambda_0^t \cdot  v_0) \trace (u_0\cdot u_0^t)\\
&= \lambda_0^t \cdot  v_0\neq 0,
\end{align*}
because $u_0$ is chosen to be a unit vector.
Here we use the identity $\trace(A^t\cdot B)=\trace (A\cdot B^t)$.
Thus, the fact $\langle \langle Y_{\lambda_0},u_0\cdot v_0^t\rangle\rangle\neq 0$ implies that $Y_{\lambda_0}\notin \tau_x M_{d-1}$ but $Y_{\lambda_0}\in\tau_x\psi^{-1}(\{w\})$, and the contradiction we announced is reached.
This completes the proof of Proposition \ref{proposition necessity} in the case $d\geq 3$ and $n=d+1$.

\bigskip
{\sc (2)}
Now we consider the case of polytopes.
Let $d\geq 2$ and $n\geq d+2$ be integers.
For every point $x \in\psi^{-1}(\{w\})\subset \XX$ we have that $\rank(x)=d$ so that conditions (i) and (ii) do not hold.
Hence, to prove the proposition, we need to find a point in $\psi^{-1}(\{w\})$ which does not satisfy either condition (iii) or condition (iv).

\medskip
Let $x=(x_1,\dots,x_{n-1})\equiv X\in \psi^{-1}(\{w\})$ be fixed and let $x$ satisfy one of the conditions (iii) or~(iv).
Consequently, there is a linear dependence between vectors $x_1,\dots,x_{n-1}$ which can be encoded by $X\cdot \ell=0$ for some concrete non-zero vector $\ell\in\R^{n-1}$.
Consider the linear subspace of $M$:
\[
\mathcal{L}:=\{Y\in M : Y\cdot\ell=0\}=\{Y\in M : \langle\langle Y, e_i\cdot \ell^t\rangle\rangle=0, \ 1\leq i\leq d\}.
\]
Here $e_1,\dots, e_d$ denotes the standard basis of $\R^d$, and $f_1,\dots, f_{n-1}$ is the standard basis of $\R^{n-1}$.
Then, $X\in \mathcal{L}$ and $\tau_X\mathcal{L}=\mathcal{L}$.
According to Lemma \ref{lemma : transversal} we have that $\dim(\X\cap M')<\dim(\X)$, and therefore $\tau_X\XX=\tau_X(\X{\setminus}M')=\tau_X\X$.
From the equality $\ker (D\psi)_x=\tau_x\psi^{-1}(\{w\})$ we get that
\begin{align*}
\tau_X\psi^{-1}(\{w\})
&=
\{y=(y_1,\dots,y_{n-1})\equiv Y\in M : \langle x_j,y_j\rangle = 0, \ 1\leq j\leq n-1,  \
y_1+\dots+y_{n-1}=0\} \\
&=\{ Y\in M :  \langle\langle Y, x_j\cdot f_j^t\rangle\rangle=0, \ 1\leq j\leq n-1, \  \langle\langle Y, e_i\cdot f^t\rangle\rangle=0, \ 1\leq i\leq d\},
\end{align*}
where $f:=\sum_{1\leq j\leq n-1}f_j$.
Let $a:=(a_1,\dots,a_d)\neq 0$ be a non-zero vector in $\R^d$ which is not a multiple of any of the vectors $x_1,\dots,x_{n-1},\psi(x)$.
The vector $\sum_{1\leq i\leq d} a_i(e_i\cdot \ell^t)\in \mathcal{L}^{\perp}$ belongs to the orthogonal complement of $\mathcal{L}$.
If $\tau_X\psi^{-1}(\{w\})\subseteq \mathcal{L}$, then on the level of orthogonal complements the inclusion changes direction, that is $\mathcal{L}^{\perp}\subseteq (\tau_X\psi^{-1}(\{w\}))^{\perp}$.
Consequently,
\begin{equation}
\label{eq 102}
\sum_{1\leq i\leq d} a_i(e_i\cdot \ell^t)=
\sum_{1\leq i\leq d} b_i(e_i\cdot f^t)+
\sum_{1\leq j\leq n-1}c_j (x_j\cdot f_j^t)
\end{equation}
for some vectors $b:=(b_1,\dots,b_d)$ and $c:=(c_1,\dots,c_{n-1})$.
The relation \eqref{eq 102} simplifies into:
\begin{equation}
\label{eq 103}
a\cdot \ell^t=b\cdot f^t+X\cdot\diag(c).
 \end{equation}
We prove that the relation \eqref{eq 103} cannot hold.

\medskip
Assume the opposite, that equality \eqref{eq 103} holds.
In the first step, we show that the vectors $a$ and $b$ are linearly independent.
If $b=\lambda a$ for some $\lambda \in \R$, it implies that $a \cdot (\ell^t-\lambda\cdot f^t)=X\cdot\diag(c)$.
Since
\begin{align*}
\ell&=\alpha_pf_q-\alpha_qf_p,\quad\quad\text{for some }1\leq p<q\leq n-1, or\\
\ell&=\alpha_pf+\alpha_nf_p, \ \quad\quad\text{for some }1\leq p\leq n-1,
\end{align*}
we have that $\ell\notin\spann\{f\}$.
Then there exists a coordinate of $\ell$ different from $\lambda$, that is $\ell_r\neq \lambda$ for some $1\leq r\leq n-1$.
Now the $r$th coordinate of the equality $a \cdot (\ell^t-\lambda\cdot f^t)=X\cdot\diag(c)$ implies that $a=\frac{c_r}{\ell_r-\lambda}x_j$, a contradiction with the choice of the vector $a$.

\medskip
We conclude that $a$ and $b$ have to be linearly independent.
Moreover, since $\rank(X)=d$ then, $a\cdot \ell^t-b\cdot f^t=X\cdot\diag(c)$ has rank $2$ implying that only two entries of $\diag(c)$ are non-zero.
Without loss of generality we can assume that $c=c_1f_1+c_2f_2$, where $c_1\neq 0$ and $c_2\neq 0$.
Hence,
\[
a\cdot \ell^t-b\cdot f^t=X\cdot\diag(c)=c_1(x_1\cdot f_1^t)+c_2(x_2\cdot f_2^t).
\]
Recall that $n\geq d+2\geq 4$ which implies $n-1\geq 3$.
Therefore, the third coordinate of equation \eqref{eq 103} implies that $\ell_3 a = b + 0$, contradicting the linear independence of $a$ and $b$.

\medskip
Consequently, the equality \eqref{eq 103} cannot hold, which implies that $\mathcal{L}^{\perp}\not\subseteq (\tau_X\psi^{-1}(\{w\}))^{\perp}$, or dually $\tau_X\psi^{-1}(\{w\})\not\subseteq \mathcal{L}$.
Hence, there exists a non-zero tangent vector in $\tau_X\psi^{-1}(\{w\})$ which does not belong to any of the linear spaces defined by conditions (iii) and (iv) for a fixed matrix $X$. Therefore, there is a curve in $\psi^{-1}(\{w\})$ passing through $X$ that contains points which do not satisfy (iii) or (iv) as well.
Any such point satisfies conditions \eqref{proof of prop: 01}, and the proof of the proposition is complete.\qed

\bibliography{references_facet_volumes_of_simplices}{}
\bibliographystyle{amsplain}

\end{document}